\newcommand{\tcercle}[1]{\ensuremath{\setlength{\unitlength}{1ex}\begin{picture}(2.8,2.8)\put(1.4,1.4){\circle{2.8}\makebox(-5.6,0){#1}}\end{picture}}}
\newcommand{\ct}[2]{\textrm{ct}({#1,#2})}
\newcommand{\omitt}[1]{}
\newcommand{\Z}{\mathbb Z}
\newcommand{\Q}{\mathbb Q}
\DeclareMathOperator{\comp}{comp}
\DeclareMathOperator{\sign}{sign}
\DeclareMathOperator{\Rev}{rev}
\DeclareMathOperator{\size}{n}
\newcommand{\ga}{\alpha}
\newcommand{\gb}{\beta}
\newcommand{\fund}[1]{L_{#1}}
\newcommand{\funda}{\fund{\ga}}
\newcommand{\te}{\theta}
\def\newShuf{fundamental }
\def\pl{[}
\def\pr{]}
\newcommand{\Luc}[1]{\todo[size=\tiny,inline,color=yellow!30]{#1
      \\ \hfill --- Luc}}
\newcommand{\Susanna}[1]{\todo[size=\tiny,inline,color=red!30]{#1
    \\ \hfill --- Susanna}}
\newcommand{\lltodo}[1]{\Luc{#1}}
\newcommand{\lucNEW}[1]{\textcolor{blue}{{\it #1}}}
\def\defstyle{\bf }
\numberwithin{equation}{section}
\theoremstyle{definition}
  \newtheorem{theorem}{Theorem}[section]
  \newtheorem{proposition}[theorem]{Proposition}
  \newtheorem{claim}[theorem]{Claim}
\newtheorem*{proposition*}{Proposition}
  \newtheorem {corollary}[theorem]{Corollary}
  \newtheorem{definition}[theorem]{Definition}
  \newtheorem{example}[theorem]{Example}
\newcommand{\compgeq}{\succcurlyeq}
\theoremstyle{remark}
\newtheorem{remark}[theorem]{Remark}
\newsavebox{\smlmat}% Box to store smallmatrix content
\savebox{\smlmat}{$\left(\begin{smallmatrix}1&2&3&4&5&6\\2&4&5&3&6&1\end{smallmatrix}\right)$}
\newsavebox{\smlmatInv}% Box to store smallmatrix content
\savebox{\smlmatInv}{$\left(\begin{smallmatrix}1&2&3&4&5&6\\6&1&4&2&3&5\end{smallmatrix}\right)$}
\title{Fundamental quasisymmetric functions in superspace}
\author{Susanna Fishel}
\address{School of Mathematical and Statistical Sciences, Arizona State University, P.O. Box 871804, Tempe, AZ 85287-1804, USA} \email{sfishel1@asu.edu}
\author{Jessica Gatica}
\address{Instituto de Matem\'atica y F\'{\i}sica, Universidad de Talca, Casilla 747, Talca,
Chile} \email{jgatica@utalca.cl}
\author{Luc Lapointe}
\address{Instituto de Matem\'atica y F\'{\i}sica, Universidad de Talca, Casilla 747, Talca,
  Chile} \email{llapointe@utalca.cl}
\author{Mar\'{\i}a Elena Pinto}
\address{Instituto de Matem\'atica y F\'{\i}sica, Universidad de Talca, Casilla 747, Talca,
Chile} \email{mepinto@utalca.cl}
\date{\today}
\begin{document}
\thanks{Funding: this work was supported by
the Fondo Nacional de Desarrollo Cient\'{\i}fico y Tecnol\'ogico de Chile (FONDECYT) Initiation into Research  Grant \#11200527 (J.G.)
and Regular Grant \#1210688 (L.L.), and by Simons Collaboration Grants for
Mathematicians \#359602 and 709671 (S.F.)}
\maketitle
\begin{abstract} The fundamental quasisymmetric functions in superspace
are a generalization of the fundamental quasisymmetric functions  involving anticommuting variables.
We obtain the action of the product,
coproduct, and antipode on the  fundamental quasisymmetric functions in superspace. We also extend to superspace the well known expansion of the Schur functions in terms of fundamental quasisymmetric functions.
\end{abstract}

\section{Introduction}
 Symmetric functions in superspace originated from the study of the
 supersymmetric generalization of the trigonometric
 Calogero-Moser-Sutherland model \cite{DLM1,DLM2,DLM3,BDLM}.  In the superspace setting, the polynomials $f(x,
 \theta)$, where $(x,\theta) =(x_1, \ldots, x_N, \theta_1,\dots
 \theta_N)$, not only depend on the usual commuting variables
 $x_1,\ldots, x_N$ but also on the anticommuting variables
 $\theta_1,\ldots, \theta_N$, where
 $\theta_i\theta_j=-\theta_j\theta_i$, and $\theta^2_i=0$.
 In \cite{FLP}, the first, third and fourth author of the current paper
 extended
 the theory of symmetric functions in superspace by introducing 
 quasisymmetric and noncommutative
 symmetric functions in superspace, especially focusing on the corresponding Hopf algebra structures. In particular, the action of the product,
 coproduct, and antipode on the monomial quasisymmetric functions in superspace basis was obtained. In this paper, our goal is to study in depth the 
 fundamental quasisymmetric functions in superspace, $L_\alpha$, which were only defined in passing in \cite{FLP}. It turns out that the action of the product,
 coproduct, and antipode on the  fundamental quasisymmetric functions in superspace  is also quite elegant.

With only commuting
variables, the expansion of fundamental quasisymmetric functions in
monomial quasisymmetric functions is described using compositions --- in
superspace we need dotted compositions to account for the anticommuting
variables. In \cite{FLP}, we introduced two different partial orders on
dotted compositions and used them to define
two families of fundamental quasisymmetric
functions in terms of their expansion by monomial quasisymmetric
functions in superspace: the  fundamental and cofundamental functions. 
Note that it is not totally surprising that there are two families of fundamental quasisymmetric functions in superspace as there are also two families of Schur functions in superspace \cite{JL}.  Indeed, extending to superspace the natural expansion of the Schur functions in terms of fundamental quasisymmetric functions (as will be decribed at the end of the introduction in the case of the fundamentals)  was a key element in defining
the fundamental and cofundamental quasisymmetric functions in superspace. 
In this article, we will however only study the fundamental
quasisymmetric functions in superspace for the simple reason that the cofundamentals  behave badly (we have not been able for instance to obtain explicit expressions for the action of the  product and antipode on the cofundamentals).

We begin this article with some review of material from \cite{FLP}, then
define a correspondence between dotted compositions and sets. It is
reminiscent of the correpondence between a composition and its set of
partial sums. We are able to simplify and unify proofs using these
sets and rewrite both monomial and fundamental quasisymmetric
functions in superspace in terms of them.

In Section~\ref{sec:coprodOfFunds}, we show that the coproduct acts quite nicely
on the fundamental basis and the proof of
Proposition~\ref{claimcoprod} is straightforward. The product also
acts well (Section~\ref{sec:prodOfFunds}) as it expands as a
signed sum of fundamentals over a set of fundamental shuffles of dotted
compositions, similar to the case with only commuting
variables. However, fundamental shuffles are more complex than the
overlapping shuffles needed in \cite{FLP}, which are in turn more
complex than the shuffles in \cite{GR} and \cite{LMW}. The proof of Proposition~\ref{prop:fundProd} is correspondingly intricate.

In Section~\ref{sec:antipodes}, we turn to the antipode.  The action of the antipode on $L_\alpha$ happens to be structurally much more involved than in the case with only commuting variables.  Fortunately, we are able to 
provide an efficient algorithm to compute the antipode by 
introducing two operations on the space of quasisymmetric functions in superspace (see \eqref{eqdecomp} and Proposition~\ref{propcolumn}).   The deeper result of this section is the compatibility theorem between the antipode and our operations (Theorem~\ref{propantiM}).

Finally, in Section~\ref{sec:schurFnsInTermsOfFunds} we show that the
beautiful connection between Schur functions and fundamental quasisymmetric functions extends naturally to superspace (this can be seen as a confirmation
that the definition of fundamental quasisymmetric functions in superspace is indeed the right one).  To be more precise,   Proposition~\ref{propsymfun} states that
\begin{equation}
s_{\Lambda/\Omega}= \sum_{\mathcal T}  (-1)^{{\rm inv}(\mathcal T)} L_{\comp(\mathcal T)} ,
\end{equation}
where the sum is over a certain set of tableaux of shape 
${\Lambda/\Omega}$, with $\Lambda$ and $\Omega$ 
superpartitions (dotted partitions).

\section{Preliminaries}
\subsection{Monomial quasisymmetric functions in superspace}

\begin{definition}
  A {\defstyle dotted composition} $(\alpha_1,\alpha_2,\dots,\alpha_l)$
  is a vector whose entries either belong to  $\{1,2,3,\dots\}$ or 
  to $\{\dot 0, \dot 1,\dot 2,\dots\}$.
The {\defstyle length} of $\ga$,
denoted $\ell(\ga)$, is the number of parts $l$ of $\alpha$.
We define the
sequence $\eta=\eta(\ga)=(\eta_1,\ldots,\eta_{\ell(\ga)})$ by
\begin{equation}\label{D:eta}
  \eta_i=\begin{cases}1&\text{if $\ga_i$ is dotted,}\\0&\text{otherwise.}\end{cases}
\end{equation}
We let $|\alpha|:=\alpha_1+\cdots +\alpha_l$
be the {\defstyle total degree} of $\alpha$ (in the sum, the dotted entries
are considered as if they did not have dots on them).
The number of dotted parts of $\alpha$ is called the {\defstyle fermionic degree} of $\ga$. We write $\alpha\vdash(n,m)$ if $\alpha$ has total degree $n$ and fermionic degree $m$ and we write $x^{\ga_i}_j$ whether
 $\ga_i$ is dotted or not.
\end{definition}

\omitt{\lucNEW{Why cite}   \lucNEW{and} \cite{LMW} ?}
The fundamental quasisymmetric functions were first defined in \cite{Ge}. See 
\cite{LMW} and \cite{GR} for more recent expositions.

\begin{definition}
  \label{D:qs}
Let $\mathcal R(x,\theta)$ be the ring of formal power series of finite degree in
$\Q[[x_1,x_2,\ldots,\te_1,\te_2,\ldots]]$.
The {\defstyle quasisymmetric functions in superspace} ${\rm sQSym}$ 
will be the $\mathbb Q$-vector space of the elements
$f$ of $\mathcal R(x,\theta)$ such that
for every dotted compositions
  $\ga=(\ga_1,\ldots,\ga_{l})$ with $\eta=\eta(\ga)$ as in
  \eqref{D:eta}, all monomials
  $\te^{\eta_1}_{i_1}\cdots\te^{\eta_{l}}_{i_l} x^{\ga_1}_{i_1}\cdots
  x^{\ga_{l}}_{i_{l}}$ in $f$ with indices $i_1<\cdots
  <i_{l}$ have the same coefficient.
\end{definition}

There is a natural basis of  ${\rm sQSym}$ provided by the
generalization of the monomial quasisymmetric functions to superspace.
\begin{definition}{\cite[Definition 2.3]{FLP}}
  \label{D:msq}
 Let $\alpha$ be a dotted composition with $\ell(\ga)=l$.  Then the {\defstyle monomial quasisymmetric
   function in superspace} $M_\ga$ is defined
 as $$M_{\alpha}=\sum_{i_1<i_2<\cdots<i_l}
 \te_{i_1}^{\eta_1} \te_{i_2}^{\eta_2} \cdots \te_{i_l}^{\eta_l} x_{i_1}^{\ga_1}\cdots
 x_{i_l}^{\ga_l},$$
 where $\eta=\eta(\ga)$.

 \omitt{$\eta_i = \left \{ \begin{array}{c@{\qquad}l}
                  1 & \text{if} \quad i\in \{j_1,j_2,\ldots, j_k\}\text{i.e., if $\ga_i$ is dotted, and} \\
                  0 & \text{otherwise}
                \end{array} \right.$}
\end{definition}
\begin{example}
 Restricting to four variables, we have
 $$M_{\dot{3},1,2}(x_1,x_2,x_3,x_4;\te_1,\te_2,\te_3,\te_4)=\te_1x_1^3x_2x_3^2+\te_1x_1^3x_2x_4^2+\te_1x_1^3x_3x_4^2+\te_2x_2^3x_3x_4^2,$$
and
$$M_{3,\dot{1},\dot{2}}(x_1,x_2,x_3,x_4;\te_1,\te_2,\te_3,\te_4)=\te_2\te_3 x_1^3x_2x_3^2+\te_2\te_4 x_1^3x_2x_4^2+\te_3\te_4 x_1^3x_3x_4^2+\te_3\te_4 x_2^3x_3x_4^2.$$
\end{example}

It was shown in \cite{AGM} that ${\rm sQSym}$ is a Hopf algebra with coproduct
$\Delta : {\rm sQSym}\to {\rm sQSym} \otimes {\rm sQSym}$ corresponding to the duplication of alphabets:
\begin{equation} \label{dupli}
(\Delta f)(x,y; \theta, \phi) = f(x,y; \theta, \phi) ,
 \end{equation}  
where $f(x,y; \theta, \phi)$ is considered an element of ${\rm sQSym} \otimes {\rm sQSym}$ and where the variables in the alphabets $\theta$ and $\phi$
anticommute.  The action of the coproduct on monomial quasisymmetric functions in superspace turns out to be quite simple.
\begin{proposition}{\cite[Proposition 5.7]{FLP}}\label{coprod} Let $\ga=(\ga_1,\ldots , \ga_l)$ be a dotted compositions, then
$$\Delta(M_{\ga})=  \sum_{k=0}^{l} M_{(\ga_1, \ldots , \ga_k)}\otimes  M_{(\ga_{k+1}, \ldots , \ga_l)}. $$

\end{proposition}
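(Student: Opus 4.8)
The plan is to compute $\Delta(M_\ga)$ directly from the defining relation \eqref{dupli}, using the explicit monomial expansion in Definition~\ref{D:msq}. Recall that the duplication of alphabets in \eqref{dupli} is understood with respect to the ordered concatenation in which every variable of the first alphabet precedes every variable of the second; that is, the merged commuting alphabet is ordered as $x_1 < x_2 < \cdots < y_1 < y_2 < \cdots$, with the anticommuting variable $\te_i$ attached to $x_i$ and the anticommuting variable $\phi_j$ attached to $y_j$. First I would substitute this combined alphabet into
$$M_\ga = \sum_{i_1 < \cdots < i_l} \te_{i_1}^{\eta_1} \cdots \te_{i_l}^{\eta_l}\, x_{i_1}^{\ga_1} \cdots x_{i_l}^{\ga_l},$$
letting each index $i_j$ range over the merged ordered set.

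The key observation is that, because all $x$-variables precede all $y$-variables in the combined order, every strictly increasing index sequence $i_1 < \cdots < i_l$ splits uniquely as a prefix of some length $k$ lying in the $x$-alphabet followed by a suffix of length $l-k$ lying in the $y$-alphabet, for a well-defined $0 \le k \le l$. For fixed $k$, the prefix indices and the suffix indices then range independently over all strictly increasing tuples in their respective alphabets. I would therefore group the substituted sum according to $k$, so that the term coming from a given split reads
$$\bigl(\te_{i_1}^{\eta_1} \cdots \te_{i_k}^{\eta_k}\, x_{i_1}^{\ga_1} \cdots x_{i_k}^{\ga_k}\bigr)\bigl(\phi_{i_{k+1}}^{\eta_{k+1}} \cdots \phi_{i_l}^{\eta_l}\, y_{i_{k+1}}^{\ga_{k+1}} \cdots y_{i_l}^{\ga_l}\bigr).$$

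The one point that genuinely needs care --- and the only place where the superspace setting departs from the classical argument --- is the passage from this product to an element of ${\rm sQSym} \otimes {\rm sQSym}$, where a Koszul sign could in principle appear from transposing odd variables. Here the crucial fact is that in the original monomial the factors $\te_{i_1}^{\eta_1} \cdots \te_{i_l}^{\eta_l}$ occur with their indices already increasing, so after the split all the $\te$'s (belonging to the first tensor factor) stand to the left of all the $\phi$'s (belonging to the second tensor factor), while the commuting variables can be moved freely; hence no odd transposition is required and no sign is produced. The split term is thus simply $\bigl(\te_{i_1}^{\eta_1} \cdots x_{i_k}^{\ga_k}\bigr) \otimes \bigl(\phi_{i_{k+1}}^{\eta_{k+1}} \cdots y_{i_l}^{\ga_l}\bigr)$. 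Summing prefixes and suffixes independently, for each fixed $k$ the two groups reassemble exactly into $M_{(\ga_1,\ldots,\ga_k)}(x;\te)$ and $M_{(\ga_{k+1},\ldots,\ga_l)}(y;\phi)$, and summing over $k$ from $0$ to $l$ gives the claimed formula. I expect this sign bookkeeping to be the main (and essentially the only) obstacle; once one checks that the anticommuting factors are already correctly ordered, the remainder is the standard deconcatenation argument familiar from the commutative case, which is why the statement comes out so clean.
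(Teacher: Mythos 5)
Your argument is correct: the deconcatenation of the increasing index sequence at the boundary between the two alphabets, together with the observation that the $\te$'s already all stand to the left of the $\phi$'s so that no Koszul sign arises, is exactly what is needed. The paper itself imports this statement from \cite{FLP} without reproving it, but your duplication-of-alphabets argument is the same splitting technique the paper uses to prove Proposition~\ref{claimcoprod} for the fundamentals, so nothing further is required.
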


\begin{example}
$$\Delta(M_{(\dot{2},1,\dot{3},4)})= 1 \otimes M_{(\dot{2},1,\dot{3},4)}+ M_{(\dot{2})}\otimes M_{(1,\dot{3},4)} +  M_{(\dot{2},1)}\otimes M_{(\dot{3},4)} +  M_{(\dot{2},1,\dot{3})}\otimes M_{(4)} +  M_{(\dot{2},1,\dot{3},4)}\otimes 1 .$$
\end{example}

\subsection{Partial orders on compositions}

We will need two partial orders on dotted compositions. Given
compositions $\ga$ and $\gb$, we say that $\ga$ covers $\gb$ in the
first partial order, written $\gb \preccurlyeq \ga$, if we can obtain $\ga$
by adding together a pair of adjacent non-dotted parts of $\gb$. The
first partial order is the transitive closure
of this cover relation.  If  $\gb \preccurlyeq \ga$ we say that
{\defstyle $\gb$ strongly refines $\ga$} or that $\ga$ {\defstyle strongly coarsens $\gb$}.

The second partial order on dotted compositions, is generated by the
following covering relation: 
$\ga$ covers $\gb$, written 
$\gb\trianglelefteq \ga$, if we can
obtain $\ga$ by adding together two adjacent parts of $\gb$,  where at most one of these two parts is  dotted. {If one of the parts in $\gb$ is dotted, then their sum in $\ga$ must be dotted.} If $\gb\trianglelefteq \ga$ we say this time that
{\defstyle $\gb$ weakly refines $\ga$} or that $\ga$ {\defstyle weakly coarsens $\gb$}.

Please see Figure~\ref{Fig:twoPosets} for the two orders.
When no parts are dotted, both covering relations become the covering
 relation on compositions described in \cite{LMW} and \cite{GR}.  Also note that it is immediate that  $\gb \preccurlyeq \ga$ implies  $\gb\trianglelefteq \ga$.

\begin{example} We have 
  $(1,1,\dot{3},2,1,1) \preccurlyeq 
(2,\dot{3},2,2) \preccurlyeq   (2,\dot{3},4) $ while
$ (1,1,\dot{2},2,\dot{3})  \trianglelefteq  (\dot{4},2,\dot{3})$ and
$ (1,1,\dot{2},2,\dot{3})  \trianglelefteq  (2,\dot{2},\dot{5})  \trianglelefteq  (\dot{4},\dot{5})$.
\end{example}

\begin{center}
\begin{figure}[ht]
\begin{tikzpicture}[font=\small]
  \def\a{2}
  \def\b{1.5}
  \node (11212) at (0,1*\b) {$(1,1,\dot{2},1,2)$};
  \node (2212) at (-1,2*\b) {$(2,\dot{2},1,2)$};
  \node (1123) at (1,2*\b) {$(1,1,\dot{2},3)$};
  \node (223) at (0,3*\b) {$(2,\dot{2},3)$};

  \draw[thick, blue] (11212)--(2212);
  \draw[thick, blue] (11212)--(1123);

   \draw[thick, blue] (223)--(2212);
   \draw[thick, blue] (223)--(1123);

\begin{scope}[shift={(4.5*\a,0)}]

\node (11212) at (0,0*\b) {$(1,1,\dot{2},1,2)$};

\node (2212) at (-1.5*\a,1*\b) {$(2,\dot{2},1,2)$};
\node (1312) at (-.5*\a,1*\b) {$(1,\dot{3},1,2)$};
\node (1132) at (.5*\a,1*\b) {$(1,1,\dot{3},2)$};
\node (1123) at (1.5*\a,1*\b) {$(1,1,\dot{2},3)$};
 
\node (412) at (-2.5*\a,2*\b) {$(\dot{4},1,2)$};
\node (232) at (-1.5*\a,2*\b) {$(2,\dot{3},2)$};
\node (223) at (-.5*\a,2*\b) {$(2,\dot{2},3)$};
\node (115) at (.5*\a,2*\b) {$(1,1,\dot{5})$};
\node (142) at (1.5*\a,2*\b) {$(1,\dot{4},2)$};
\node (133) at (2.5*\a,2*\b) {$(1,\dot{3},3)$};

\node (52) at (-1.5*\a,3*\b) {$(\dot{5},2)$};
\node (43) at (-.5*\a,3*\b) {$(\dot{4},3)$};
\node (25) at (.5*\a,3*\b) {$(2,\dot{5})$};
\node (16) at (1.5*\a,3*\b) {$(1,\dot{6})$};

\node (7) at (0*\a,4*\b) {$(\dot{7})$};

\draw[thick, blue] (11212)--(2212);
\draw[thick, red] (11212)--(1312);
\draw[thick, red] (11212)--(1132);
\draw[thick, blue] (11212)--(1123);

\draw[thick, red] (412)--(2212);
\draw[thick, red] (232)--(2212);
\draw[thick, blue] (223)--(2212);
\draw[thick, red] (412)--(1312);
\draw[thick, red] (142)--(1312);
\draw[thick, blue] (133)--(1312);
\draw[thick, blue] (232)--(1132);
\draw[thick, red] (115)--(1132);
\draw[thick, red] (142)--(1132);
\draw[thick, blue] (223)--(1123);
\draw[thick, red] (133)--(1123);
\draw[thick, red] (115)--(1123);

\draw[thick, red] (412)--(52);
\draw[thick, red] (232)--(52);
\draw[thick, red] (223)--(43);
\draw[thick, blue] (412)--(43);
\draw[thick, red] (142)--(52);
\draw[thick, red] (133)--(43);
\draw[thick, red] (232)--(25);
\draw[thick, blue] (115)--(25);
\draw[thick, red] (115)--(16);
\draw[thick, red] (142)--(16);
\draw[thick, red] (223)--(25);
\draw[thick, red] (133)--(16);

\draw[thick, red] (7)--(52);
\draw[thick, red] (7)--(43);
\draw[thick, red] (7)--(25);
\draw[thick, red] (7)--(16);

\end{scope}

  \end{tikzpicture}
  \caption{The poset on the left is all dotted compositions above
  $(1,1,\dot{2},1,2)$ using the first partial partial order ($\preccurlyeq$). On the
  right the poset is again all dotted compositions above $(1,1,\dot{2},1,2)$,
  but using the second partial partial order ($\trianglelefteq$). Note that on the right,  the covers that are also covers in the partial order ($\preccurlyeq$) are in blue. }
\label{Fig:twoPosets}
\end{figure}
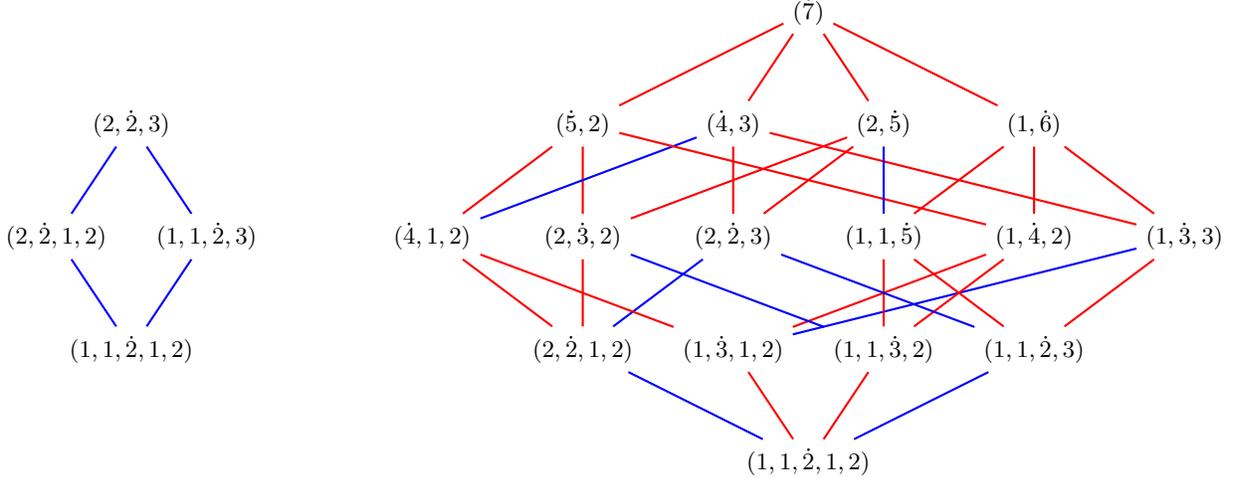
\end{center}

\subsection{Antipode}
{ We now review the action of the antipode $S: {\rm sQSym} \to {\rm
   sQSym}$, defined explicitly on quasisymmetric monomial functions in
 superspace in \cite{FLP}.}

Let the {\defstyle reverse} of a composition $\ga=(\ga_1,\ldots,\ga_l)$ be
$\Rev(\ga)=(\ga_l,\ldots,\ga_1)$.
 \begin{proposition}{\cite[Proposition 5.10]{FLP}}
\label{prop:antiM}
  Let $\ga$ be a dotted composition. Then
$$S(M_{\ga})=(-1)^{\ell(\ga) + \binom{m_\ga}{2} }\sum_{\gamma\,  \trianglerighteq \,  \Rev(\ga)}M_{\gamma},$$
where $m_\alpha$ is the fermionic degree of $\alpha$.
\end{proposition}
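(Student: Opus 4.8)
The plan is to use that ${\rm sQSym}$ is a connected graded Hopf algebra, in which the antipode is the unique convolution inverse of the identity and is therefore pinned down by the recursion coming from $m\circ(\mathrm{id}\otimes S)\circ\Delta=\eta\circ\epsilon$. Since Proposition~\ref{coprod} shows that $\Delta$ acts on the monomial basis by deconcatenation, for $\ga=(\ga_1,\dots,\ga_l)$ with $\ga\neq\emptyset$ this recursion reads
$$\sum_{k=0}^{l} M_{(\ga_1,\ldots,\ga_k)}\,S\bigl(M_{(\ga_{k+1},\ldots,\ga_l)}\bigr)=0,$$
so it suffices to verify that the closed form in the statement satisfies this identity. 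Equivalently, iterating the deconcatenation coproduct gives Takeuchi's formula
$$S(M_\ga)=\sum_{k\ge 1}(-1)^k\!\!\sum_{\substack{\ga=\ga^{(1)}\cdots\ga^{(k)}}}\! M_{\ga^{(1)}}\cdots M_{\ga^{(k)}},$$
the inner sum ranging over decompositions of $\ga$ into $k$ nonempty consecutive blocks. I would run the verification as an induction on $l=\ell(\ga)$, so that $S$ is already known on the proper suffixes appearing above.

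The one unavoidable computational input is the product of monomial quasisymmetric functions in superspace, which expands as a signed overlapping shuffle (the super analogue of the shuffle used in \cite{FLP}): merging a part of one factor with a part of the other is allowed only when at most one of the two is dotted, exactly the operation generating the weak order $\trianglelefteq$, and each transposition of two $\te$-variables contributes a sign. Substituting the inductive form of $S\bigl(M_{(\ga_{k+1},\ldots,\ga_l)}\bigr)$, namely a signed sum of $M_\gamma$ over $\gamma\trianglerighteq\Rev(\ga_{k+1},\ldots,\ga_l)$, and expanding each product $M_{(\ga_1,\ldots,\ga_k)}\,M_\gamma$ by this shuffle, I would collect the coefficient of a fixed basis element $M_\delta$ on the left-hand side.

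The combinatorial heart is then to show that every such coefficient vanishes for $\ga\neq\emptyset$. After fixing $\delta$, the pairs consisting of a cut index $k$ and a shuffle producing $M_\delta$ are governed by the ways the prefix and the coarsened reversed suffix interleave; organizing these by the set of cut points of $\delta$ realizes them as an interval in the weak coarsening poset. In the purely commuting case this interval is a Boolean lattice and the signed sum is its vanishing Euler characteristic; the dotted-part constraint deforms the lattice, but the alternating sum still telescopes, which I would make precise with a sign-reversing involution that toggles a single non-dotted cut point. Tracking the $\te$-variables through this argument, the reversal of the $m_\ga$ dotted parts forced by $\Rev$ reorders $m_\ga$ anticommuting variables and produces the global sign $(-1)^{\binom{m_\ga}{2}}$, while the block merges assemble the factor $(-1)^{\ell(\ga)}$.

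The main obstacle is precisely the interaction between the overlapping-shuffle product and the anticommuting signs: one must check that the sign picked up when $\te$-variables cross during a shuffle is consistent with the single global sign $(-1)^{\binom{m_\ga}{2}}$ predicted by the reversal, and that the sign-reversing involution can always be arranged to move a \emph{non-dotted} cut point, since a dotted part can neither be split nor created by a merge. Verifying that such a non-dotted toggle always exists when $\ga\neq\emptyset$, and that the super-signs are invariant under it, is the delicate point; the commuting skeleton of the cancellation is classical, but the bookkeeping for the $\te$-variables is what makes the superspace statement genuinely more involved.
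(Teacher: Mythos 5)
First, a point of reference: the paper does not prove Proposition~\ref{prop:antiM}; it is imported verbatim from \cite[Proposition 5.10]{FLP} as a preliminary, so there is no in-paper argument to compare yours against. Judged on its own terms, your overall route is legitimate: ${\rm sQSym}$ is connected when graded by the pair (total degree, fermionic degree) --- a point worth making explicit, since $M_{(\dot 0)}$ has total degree $0$ --- the coproduct deconcatenates on the $M$-basis by Proposition~\ref{coprod}, and the antipode is therefore pinned down by the recursion $\sum_{k=0}^{l} M_{(\ga_1,\ldots,\ga_k)}\,S(M_{(\ga_{k+1},\ldots,\ga_l)})=0$, so it does suffice to verify that the closed formula satisfies this identity; small cases check out (e.g.\ the recursion gives $S(M_{(\dot 0,1)})=M_{(1,\dot 0)}+M_{(\dot 1)}$ in two lines, matching the formula with sign $(-1)^{2+\binom{1}{2}}=+1$).

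The gap is that essentially all of the content of the proposition sits in the step you defer. After substituting the claimed formula into each term of the recursion and expanding each $M_{(\ga_1,\ldots,\ga_k)}\,M_\gamma$ as a signed overlapping shuffle, you must actually construct the sign-reversing involution on pairs (cut index $k$, overlapping shuffle) with fixed output $\delta$, prove it is well defined --- your own worry, namely that a non-dotted toggle always exists, genuinely requires an argument, because a dotted part can be neither split nor merged with another dotted part, so the usual Boolean-lattice cancellation is deformed exactly where the statement is new --- and, hardest of all, prove that the three sign sources (the crossing sign of the overlapping shuffle, the term-dependent $(-1)^{\binom{m_k}{2}}$ attached to the reversed suffix, and $(-1)^{l-k}$) combine so that the involution reverses sign and the uncancelled contribution carries precisely $(-1)^{\ell(\ga)+\binom{m_\ga}{2}}$. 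Statements such as ``the alternating sum still telescopes'' and ``reversing $m_\ga$ anticommuting variables produces the global sign $(-1)^{\binom{m_\ga}{2}}$'' are the claims to be proved, not observations; as written the proposal is a plan rather than a proof. If you want a complete argument with far less bookkeeping, the shorter standard route is duality with ${\rm sNSym}$: the paper's remark in Section~\ref{sec:antipodes} already records the antipode of ${\rm sNSym}$ on the generators $H_n$ and $H_{\dot n}$ as a sum over weak coarsenings, and since $\{H_\ga\}$ and $\{M_\ga\}$ are dual bases, the formula for $S(M_\ga)$ (including the reversal and the $\binom{m_\ga}{2}$ from reordering the odd generators) can be obtained by transposing the graded anti-homomorphism property of the antipode on $H_\ga=H_{\ga_1}\cdots H_{\ga_l}$.
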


\subsection{Fundamental quasisymmetric functions in superspace}

Just as there are two natural  extensions to superspace of Schur functions, $s_\Lambda$ and $\bar s_\Lambda$, there are two natural extension of fundamental quasisymmetric functions.  
\begin{definition}{\cite{FLP}}
  \label{def:fundamental}
The {\defstyle fundamental quasisymmetric function in superspace} $\funda$ is
$$\funda=\sum_{\gb  \preccurlyeq \ga}M_{\gb}$$
while the {\defstyle cofundamental quasisymmetric function in superspace} $\bar L_\alpha$ is
$$
\bar L_\alpha= \sum_{\gb \trianglelefteq  \ga}M_{\gb}.
$$
\end{definition}
 As stated in the introduction, 
we will only consider in this article the fundamental quasisymetric functions as the cofundamentals do not have, to the best of our knowledge, noteworthy properties.

\subsection{New characterizations of the monomial and fundamental quasisymmetric functions in superspace}
Before describing the properties of the fundamental quasisymmetric functions in superspace, we first need to provide new characterizations of the monomial and fundamental quasisymmetric functions in superspace.  These new characterizations will rely on sums over subsets of certain sets that we now introduce.

Given a dotted composition $\alpha=(\alpha_1,\dots,\alpha_l)$, we let
$$
D(\alpha)= \{\alpha_1+\eta_1,\alpha_1+\eta_1+\alpha_2+\eta_2,\dots, \alpha_1+\eta_1+\cdots+\alpha_{l-1}+\eta_{l-1}\},
$$
where we recall that $\eta_i$ was defined in \eqref{D:eta}.
For instance, given $\alpha=(2,3,\dot 1, \dot 2, 4, \dot 1, \dot 0, 2,\dot 1)$
we have $D(\alpha)=\{2,5,7,10,14,16,17,19\}$. The set $D(\alpha)$ is in the usual case (when there is no dotted entry)  naturally associated to the descent set of a permutation.  This will not be the case here.  But the set $D(\alpha)$, together with two sets that we are about to introduce, will provide a more convenient means 
to determine whether two dotted compositions are comparable in the order $\preccurlyeq$.  We now let
$$
E(\alpha)= \{ j \, |\,  \alpha_1+\eta_1+\cdots+\alpha_{i-1}+\eta_{i-1} < j < \alpha_1+\eta_1+\cdots+\alpha_{i}+\eta_i, {\rm ~for~}\alpha_i {\rm ~dotted} \}  
$$
and
$$
F(\alpha)= \{ \alpha_1+\eta_1+\cdots+\alpha_{i}+\eta_i  \, | \, \alpha_i {\rm ~is~dotted} \}  .
$$
Using again $\alpha=(2,3,\dot 1, \dot 2, 4, \dot 1, \dot 0, 2,\dot 1)$, we have $E(\alpha)=\{6,8,9,15,20\}$ and $F(\alpha)=\{ 7,10,16,17,21\}$.  Observe that
$E(\alpha)\cap D(\alpha)=\emptyset$ and that
all the entries of $F(\alpha)$ are contained in $D(\alpha)$ except possibly the largest one (this occurs when the last entry of $\alpha$ is dotted such as in our example). 
Let $F^-(\alpha)$  be equal to $F(\alpha)$ minus its largest entry if it does not belong to $D(\alpha)$.  For fixed total and fermionic degrees, there is an obvious bijection between the set of dotted compositions and the set of pairs $\bigl(D(\alpha),F(\alpha) \bigr)$, with $F^-(\alpha) \subseteq D(\alpha)$, since
$F(\alpha)$ tells us which partial sums correspond to dotted entries. 

In the non-dotted case, for $\alpha,\beta$ compositions of a fixed total degree, 
we have that $\beta \preccurlyeq \alpha \iff D(\alpha) \subseteq D(\beta)$ \cite{GR,LMW}. 
In order to extend this result to superspace, we not only need the set $F(\alpha)$, we also need the set $E(\alpha)$. Indeed, 
for $\alpha$ and $\beta$ dotted compositions
of the same total and fermionic degrees,  we have that
\begin{equation} \label{precequiv}
\beta \preccurlyeq \alpha \iff D(\alpha) \subseteq D(\beta), E(\alpha)=E(\beta) {\rm ~~and~~} F(\alpha)=F(\beta)
\end{equation}
 since the extra condition $E(\alpha)=E(\beta)$  ensures that the dotted entries are never added together  with an adjacent entry. For instance, considering $\alpha=(2,3,\dot 1,4)$ and $\beta=(2,3,1,\dot 0,4)$, which are such that $\beta \not \preccurlyeq \alpha$, we have that $D(\alpha)=\{2,5,7 \} \subseteq \{ 2,5,6,7\}=  D(\beta)$ and $F( \alpha)=F(\beta)=\{ 7\} $. But the fact that $E(\alpha)=\{6 \} \neq \emptyset =E(\beta)$ tells us that the $1$ and $\dot 0$ entries in $\beta$ were added together to form the $\dot 1$ in $\alpha$.

It is also not too hard to show that the previous equivalence can be rewritten as
\begin{equation} \label{precequiv2}
\beta \preccurlyeq \alpha \iff D(\alpha) \subseteq D(\beta), D(\beta) \cap E(\alpha)=\emptyset {\rm ~~and~~} F(\alpha)=F(\beta)
\end{equation}
since the condition $D(\beta) \cap E(\alpha)=\emptyset$ implies that the extra entries in $D(\beta)$ do not shorten the lengths of the dotted entries.

Making use of our new notation, if $\alpha \vdash (n,m)$ then we can rewrite 
$M_\alpha$ as
\begin{equation} \label{eqMvar}
M_\alpha= \sum_{\substack{ i_1 \leq i_2 \leq \cdots \leq i_{m+n} \\ i_k < i_{k+1} \iff k \in D(\alpha)   } } 
\left( \prod_{j \in F(\alpha)}  \frac{\theta_{i_j}}{x_{i_j}} \right) x_{i_1} \cdots x_{i_{n+m}}.\end{equation}

\begin{example}
  Let $\alpha=(2,\dot{3},1) \vdash(6,1)$. The corresponding monomial quasisymetric function in superspace is
  $$M_{(2,\dot{3},1)}=\sum_{i_1<i_2<i_3}\theta_{i_2}x_{i_1}^2x_{i_2}^3x_{i_3}.$$
Using $D({\alpha})=\{2,6\}$, $E({\alpha})=\{3,4,5\}$, and $F({\alpha})=\{6\}$,
the expression on the right in \eqref{eqMvar} then becomes in this case
  \begin{align*}
    M_{(2,\dot{3},1)}&=\sum_{\substack{i_1\le i_2\le i_3\le i_4\le i_5\le i_6 \le i_7\\i_k<i_{k+1}\iff k\in\{2,6\}}}\left( \prod_{j \in \{6\}}  \frac{\theta_{i_j}}{x_{i_j}} \right) x_{i_1}x_{i_2}x_{i_3}x_{i_4}x_{i_5}x_{i_6}x_{i_7}\\
&=\sum_{i_1=i_2< i_3=i_4= i_5= i_6 < i_7}\left( \frac{\theta_{i_6}}{x_{i_6}} \right) x_{i_1}^2x_{i_6}^4 x_{i_7},
  \end{align*}
which is easily seen to coincide with our previous expression for $ M_{(2,\dot{3},1)}$.   
  \end{example}

\begin{proposition} Suppose that $\alpha \vdash (n,m)$. Then
$$  
  L_\alpha=
  \sum_{\substack{ i_1 \leq i_2 \leq \cdots \leq i_{m+n} \\ i_k < i_{k+1} {\rm ~if~} k \in D(\alpha)     \\ i_k = i_{k+1} {\rm ~if~} k \in E(\alpha) } 
  } 
\left( \prod_{j \in F(\alpha)}  \frac{\theta_{i_j}}{x_{i_j}} \right) x_{i_1} \cdots x_{i_{n+m}} . $$
\end{proposition}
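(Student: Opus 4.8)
The plan is to start from the defining formula $L_\alpha = \sum_{\beta \preccurlyeq \alpha} M_\beta$, expand each summand with the new characterization \eqref{eqMvar}, and then reorganize the resulting double sum so that it is indexed directly by the weakly increasing sequences $i_1 \le \cdots \le i_{m+n}$ rather than by pairs consisting of a composition together with a sequence. First I would record that coarsening in the order $\preccurlyeq$ preserves both the total and the fermionic degree, so every $\beta$ occurring satisfies $\beta \vdash (n,m)$ and \eqref{eqMvar} applies to it with the same number $m+n$ of variables. By the equivalence \eqref{precequiv2}, each such $\beta$ also has $F(\beta) = F(\alpha)$; consequently the anticommuting factor $\prod_{j \in F(\beta)} \theta_{i_j}/x_{i_j}$ appearing in $M_\beta$ coincides with $\prod_{j \in F(\alpha)} \theta_{i_j}/x_{i_j}$ for every term, and may be factored out of the whole sum.

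The core of the argument is a reindexing. A weakly increasing sequence $i_1 \le \cdots \le i_{m+n}$ has a well-defined \emph{strict-increase set} $D := \{k : i_k < i_{k+1}\}$, and \eqref{eqMvar} says that such a sequence contributes to $M_\beta$ exactly when $D = D(\beta)$. Using the bijection between dotted compositions of $(n,m)$ and admissible pairs $(D(\beta), F(\beta))$, together with the fact that $F(\beta) = F(\alpha)$ is forced, the set $D$ alone pins down $\beta$; hence each monomial arises from at most one $\beta$ and nothing is double-counted. Translating \eqref{precequiv2} through this identification, a sequence with strict-increase set $D$ comes from some $\beta \preccurlyeq \alpha$ if and only if $D(\alpha) \subseteq D$ and $D \cap E(\alpha) = \emptyset$. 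For a weakly increasing sequence these two containments say precisely that $i_k < i_{k+1}$ whenever $k \in D(\alpha)$ and $i_k = i_{k+1}$ whenever $k \in E(\alpha)$, since a position that is not a strict increase is an equality. Collecting terms therefore converts $\sum_{\beta \preccurlyeq \alpha} M_\beta$ into a single sum over sequences obeying exactly the two constraints in the statement, carrying the common factor $\prod_{j \in F(\alpha)} \theta_{i_j}/x_{i_j}$, which is the asserted identity.

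I expect the main obstacle to be the bookkeeping that makes this reindexing an honest bijection, with no term created or lost. The delicate point is to confirm that for every sequence meeting the two constraints the pair $(D, F(\alpha))$ really is admissible, i.e. that $F^-(\alpha) \subseteq D$, so that a matching $\beta$ exists. This holds because $F^-(\alpha) \subseteq D(\alpha) \subseteq D$, and because whenever the largest element of $F(\alpha)$ lies outside $D(\alpha)$ it must equal $n+m$ (the case in which the last part of $\alpha$ is dotted) and therefore lies outside every strict-increase set $D$, whose elements are at most $n+m-1$. Checking that the ``$F^-$'' truncation is compatible with the varying set $D$ in this way is the one spot that demands genuine care; once it is settled, the remaining manipulations are routine reindexing.
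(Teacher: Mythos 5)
Your proposal is correct and follows essentially the same route as the paper: expand each $M_\beta$ via \eqref{eqMvar}, translate the condition $\beta \preccurlyeq \alpha$ through \eqref{precequiv}--\eqref{precequiv2} into conditions on $D(\beta)$, $E(\alpha)$, $F(\alpha)$, and reindex the double sum by the strict-increase set of the sequence. Your explicit check that $(D, F(\alpha))$ is always an admissible pair (via $F^-(\alpha) \subseteq D(\alpha) \subseteq D$ and the observation that the only possible element of $F(\alpha)\setminus D(\alpha)$ is $n+m$) is a detail the paper leaves implicit, but it does not change the argument.
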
  
\begin{proof}  The proof is similar to that of the non-supersymmetric case found in \cite{GR}. Using \eqref{eqMvar}, \eqref{precequiv} and  \eqref{precequiv2}, we get
  \begin{align}
    L_\alpha  = \sum_{\beta \preccurlyeq \alpha} M_\beta
 & =
 \sum_{\beta \preccurlyeq \alpha} \sum_{\substack{ i_1 \leq i_2 \leq \cdots \leq i_{m+n} \\ i_k < i_{k+1} \iff k \in D(\beta)  } 
  } 
\left( \prod_{j \in F(\beta)}  \frac{\theta_{i_j}}{x_{i_j}} \right) x_{i_1} \cdots x_{i_{n+m}}
 \nonumber \\ 
 &
 = \sum_{\substack{D(\alpha) \subseteq D(\beta) \\ E(\alpha)=E(\beta), F(\alpha)=F(\beta)}} \sum_{\substack{ i_1 \leq i_2 \leq \cdots \leq i_{m+n} \\ i_k < i_{k+1} \iff k \in D(\beta)  } 
  } 
\left( \prod_{j \in F(\alpha)}  \frac{\theta_{i_j}}{x_{i_j}} \right) x_{i_1} \cdots x_{i_{n+m}}
 \nonumber
 \\
 &= \sum_{\substack{D(\alpha) \subseteq Z \subseteq \{ 1,\dots,n-1 \} \setminus E(\alpha)}}
 \sum_{\substack{ i_1 \leq i_2 \leq \cdots \leq i_{m+n} \\ i_k < i_{k+1} \iff k \in Z  } 
  } 
\left( \prod_{j \in F(\alpha)}  \frac{\theta_{i_j}}{x_{i_j}} \right) x_{i_1} \cdots x_{i_{n+m}}
 \nonumber
 \\
 &=
 \sum_{\substack{ i_1 \leq i_2 \leq \cdots \leq i_{m+n} \\ i_k < i_{k+1} {\rm ~if~} k \in D(\alpha)     \\ i_k = i_{k+1} {\rm ~if~} k \in E(\alpha) } 
  } 
 \left( \prod_{j \in F(\alpha)}  \frac{\theta_{i_j}}{x_{i_j}} \right) x_{i_1} \cdots x_{i_{n+m}} .
\nonumber 
 \end{align}  
\end{proof}

\section{Coproduct of fundamentals}\label{sec:coprodOfFunds}
{
The coproduct of the fundamental quasisymmetric functions in
superspace is nearly identical to the non-super case. The difference
is in near concatenation. The {\defstyle concatenation} $\ga\cdot\gb$
of the dotted compositions $\ga=(\ga_1,\ldots,\ga_k)$ and
$\gb=(\gb_1,\ldots,\gb_{l})$ is
$(\ga_1,\ldots,\ga_k,\gb_1,\ldots,\gb_{l})$.  If not both  $\ga_k$
and $\gb_1$ are dotted, then the {\defstyle near concatenation}
$\ga\odot\gb$ of $\ga$ and $\gb$ is
$(\ga_1,\ldots,\ga_k+\gb_1,\ldots,\gb_{l}).$ If one of them is dotted, then so is the entry $\ga_k+\gb_i$. We can rewrite
Proposition~\ref{coprod} as
}
\begin{equation} \label{newcoprod}
  \Delta(M_{\ga})=\sum_{\gb\cdot\gamma=\ga}M_{\gb}\otimes M_{\gamma},
\end{equation}  
and we have the following claim for the coproduct of fundamental quasisymmetric superfunctions (we should note that the coproduct of fundamentals was also obtained in \cite{AGM} by duality with the product of noncommutative ribbon Schur functions in superspace)
\omitt{
\Susanna{\begin{claim} \label{claimcoprod}
   Let $\ga$ be a dotted composition. Then
  $$\Delta(\funda)=\sum_{\substack{\gb\cdot\gamma=\ga\text{ or
      }\\\gb\odot\gamma=\ga}}\fund{\gb}\otimes \fund{\gamma}.$$
\lltodo{We should have a formula for $\Delta(\bar L_\alpha)$}
 \end{claim}
}
}

\begin{proposition} Let $\alpha$ be a dotted composition.  We have
  \label{claimcoprod}
  \begin{equation}
    \label{eq:fundcoprod}
  \Delta(L_{\alpha})=\sum_{\substack{\eta\cdot\gamma=\alpha\text{ or
      }\\ \eta\odot\gamma=\alpha}} L_{\eta}\otimes L_{\gamma}.
  \end{equation}
\end{proposition}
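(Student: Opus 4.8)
The plan is to expand both sides in the monomial basis $\{M_\beta\}$ and compare coefficients. On the left, combining the definition $L_\alpha=\sum_{\beta\preccurlyeq\alpha}M_\beta$ with the deconcatenation coproduct \eqref{newcoprod}, I would write
$$\Delta(L_\alpha)=\sum_{\beta\preccurlyeq\alpha}\ \sum_{\mu\cdot\nu=\beta}M_\mu\otimes M_\nu,$$
so that the coefficient of a fixed $M_\rho\otimes M_\sigma$ is the number of $\beta\preccurlyeq\alpha$ with $\rho\cdot\sigma=\beta$; since concatenation is injective this coefficient is the indicator $[\rho\cdot\sigma\preccurlyeq\alpha]$. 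On the right, expanding each $L_\eta$ and $L_\gamma$ by the same definition gives
$$\sum_{\eta\cdot\gamma=\alpha\text{ or }\eta\odot\gamma=\alpha}L_\eta\otimes L_\gamma=\sum_{\eta,\gamma}\ \sum_{\mu\preccurlyeq\eta}\ \sum_{\nu\preccurlyeq\gamma}M_\mu\otimes M_\nu,$$
whose coefficient of $M_\rho\otimes M_\sigma$ is the number of admissible decompositions $(\eta,\gamma)$ of $\alpha$ with $\rho\preccurlyeq\eta$ and $\sigma\preccurlyeq\gamma$. The proposition thus reduces to the purely combinatorial identity
$$\#\{(\eta,\gamma):\eta\cdot\gamma=\alpha\text{ or }\eta\odot\gamma=\alpha,\ \rho\preccurlyeq\eta,\ \sigma\preccurlyeq\gamma\}=[\,\rho\cdot\sigma\preccurlyeq\alpha\,].$$

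To prove this identity I would exhibit, for every pair $(\rho,\sigma)$ with $\rho\cdot\sigma\preccurlyeq\alpha$, a unique compatible decomposition. Set $\beta=\rho\cdot\sigma$ and let $p=|\rho|+m_\rho$ (with $m_\rho$ the fermionic degree of $\rho$) be the position in the partial-sum encoding at which $\rho$ meets $\sigma$, so that $p\in D(\beta)$ when both $\rho,\sigma$ are nonempty. Since $\beta\preccurlyeq\alpha$, the coarsening from $\beta$ to $\alpha$ merges only adjacent non-dotted parts, so the $\rho/\sigma$ boundary either survives as a boundary of $\alpha$ or is absorbed strictly inside one part. In the first case $\alpha=\eta\cdot\gamma$, where $\eta$ coarsens $\rho$ and $\gamma$ coarsens $\sigma$, and this is the unique concatenation compatible with $(\rho,\sigma)$. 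In the second case the boundary lies interior to a part of $\alpha$, which must be non-dotted because $\preccurlyeq$ never merges a dotted part; splitting that part at the boundary yields $\alpha=\eta\odot\gamma$ with a non-dotted joining part $\eta_{\ell(\eta)}+\gamma_1$, again uniquely determined.

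For the converse I would argue that a compatible decomposition forces $\rho\cdot\sigma\preccurlyeq\alpha$: if $\eta\cdot\gamma=\alpha$ then $\rho\cdot\sigma\preccurlyeq\eta\cdot\gamma=\alpha$, while if $\eta\odot\gamma=\alpha$ with non-dotted join then $\rho\cdot\sigma\preccurlyeq\eta\cdot\gamma\preccurlyeq\eta\odot\gamma=\alpha$, the last cover being the merge of the two non-dotted boundary parts. Hence the count is $0$ whenever $\rho\cdot\sigma\not\preccurlyeq\alpha$, matching the indicator. I would make the case split rigorous through the characterization \eqref{precequiv}: the hypothesis $\rho\cdot\sigma\preccurlyeq\alpha$ becomes $D(\alpha)\subseteq D(\rho\cdot\sigma)$ with $E(\alpha)=E(\rho\cdot\sigma)$ and $F(\alpha)=F(\rho\cdot\sigma)$, and the two cases correspond exactly to whether $p\in D(\alpha)$ (concatenation) or $p\notin D(\alpha)$ (near-concatenation), while the $F$-condition guarantees that the split part is never one of the dotted parts of $\alpha$.

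The main obstacle I anticipate is precisely the near-concatenation case and its interaction with the dotted parts. One must verify that an interior boundary can land only inside a non-dotted part and that the joining part $\eta_{\ell(\eta)}+\gamma_1$ is recorded with the correct non-dotted decoration, so that $\eta\odot\gamma$ is well defined and equals $\alpha$; this is exactly the point at which the asymmetry between the two partial orders enters, since it is the preservation of $E(\alpha)$ and $F(\alpha)$ along the chain from $\rho\cdot\sigma$ to $\alpha$ — not merely the inclusion of $D$-sets — that pins down a unique decomposition and excludes any spurious splitting at a dotted part.
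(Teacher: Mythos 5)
Your route --- expanding both sides in the monomial basis and reducing the identity to the count
$\#\{(\eta,\gamma):\eta\cdot\gamma=\alpha\text{ or }\eta\odot\gamma=\alpha,\ \rho\preccurlyeq\eta,\ \sigma\preccurlyeq\gamma\}=[\rho\cdot\sigma\preccurlyeq\alpha]$
--- is genuinely different from the paper's, which applies the alphabet-duplication definition of $\Delta$ directly to the $D,E,F$ presentation of $L_\alpha$ and reads off a tensor factorization for each split point $s$. Your reduction is correct, and your forward direction is sound: if $\rho\cdot\sigma\preccurlyeq\alpha$ then $E(\alpha)=E(\rho\cdot\sigma)$ and $D(\rho\cdot\sigma)\cap E(\rho\cdot\sigma)=\emptyset$ force the boundary position $p$ to avoid the interior of every dotted part of $\alpha$, so the induced decomposition is unique and, when it is a near-concatenation, merges two non-dotted parts.

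The gap is in the converse, and it cannot be patched as the statement is literally written. The paper defines $\eta\odot\gamma$ whenever at most one of $\eta_{\ell(\eta)}$, $\gamma_1$ is dotted, so the index set on the right-hand side also contains near-concatenations that split a \emph{dotted} part of $\alpha$, e.g.\ $(\dot 0)\odot(2)=(\dot 2)$ and $(1)\odot(\dot 1)=(\dot 2)$. For such a pair, $\rho=\eta$, $\sigma=\gamma$ is compatible in your count, yet $\rho\cdot\sigma=(\dot 0,2)\not\preccurlyeq(\dot 2)$: your chain $\rho\cdot\sigma\preccurlyeq\eta\cdot\gamma\preccurlyeq\eta\odot\gamma$ breaks at the last link precisely because $\preccurlyeq$ never merges a dotted part. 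Your closing remark that the $E$- and $F$-conditions ``exclude any spurious splitting at a dotted part'' is true of the decomposition \emph{produced from} a pair with $\rho\cdot\sigma\preccurlyeq\alpha$, but it does not remove the dotted-join decompositions from the index set, and their contributions do not vanish: since $L_{(\dot 2)}=M_{(\dot 2)}$ and $\Delta(M_{(\dot 2)})=1\otimes M_{(\dot 2)}+M_{(\dot 2)}\otimes 1$, the left side of \eqref{eq:fundcoprod} for $\alpha=(\dot 2)$ has two terms, while the right side as written has the four extra terms $L_{(\dot 0)}\otimes L_{(2)}+L_{(\dot 1)}\otimes L_{(1)}+L_{(1)}\otimes L_{(\dot 1)}+L_{(2)}\otimes L_{(\dot 0)}$. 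The sum must therefore be restricted to near-concatenations whose two merged parts are both non-dotted (the same hypothesis as in \eqref{propo4}); with that restriction made explicit your argument closes up, and it in fact surfaces a point the paper's proof passes over, namely that a split point $s\in E(\alpha)$ contributes nothing because an $x$-index can never equal a $y$-index across the duplicated alphabet.
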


\begin{proof} We use the interpretation \eqref{dupli} of the coproduct as the duplication of alphabets.  We have
  $$
  \Delta(L_{\alpha})= L_\alpha(x,y; \theta,\phi) = \sum_{s=0}^{n+m}
  \sum_{\substack{ i_1 \leq i_2 \leq \cdots \leq i_s \\ i_{s+1} \leq i_{s+2} \leq \cdots \leq i_{n+m} \\ i_k < i_{k+1} {\rm ~if~}  k \in D(\alpha) \setminus \{ s\}\\ i_k = i_{k+1} {\rm ~if~}  k \in E(\alpha)  \setminus \{ s\} } } 
  \left( \prod_{j \in F(\alpha); j \leq s}\frac{\theta_{i_j}}{x_{i_j}} \right)
x_{i_1} \cdots x_{i_s}
\left( \prod_{\ell \in F(\alpha); \ell > s}\frac{\phi_{i_\ell}}{y_{i_\ell}} \right)
y_{i_{s+1}} \cdots y_{i_{m+n}}.
$$
Is is then easy to see that if $s \in D(\alpha) \cup \{ 0,n+m\}$, then the summand corresponds to $L_\beta(x;\theta) L_\gamma(y;\phi) $ with $\beta$ and $\gamma$ such that $\beta \cdot \gamma=\alpha$ while the remaining values of $s$ correspond to $L_\beta(x;\theta) L_\gamma(y;\phi) $ with $\beta$ and $\gamma$ such that $\beta \odot \gamma=\alpha$.
  \end{proof}

\section{Products of fundamentals}
\label{sec:prodOfFunds}
In \cite{GR} and \cite{LMW}, shuffles are defined on
permutations/linear extensions and used to describe the product of
quasisymmetric functions. In \cite{FLP}, overlapping shuffles are used
to describe the product of monomial symmetric functions in superspace.
For fundamental quasisymmetic functions in superspace, we will
define \newShuf shuffles. The product of two fundamental
quasisymmetric functions in superspace, $\funda$ and $\fund{\gb}$, is
the sum over all \newShuf shuffles of two permutations $w_{\ga}$ and
$w_{\gb}$, where the descent sets of $w_{\ga}$ and $w_{\gb}$ are the
sets of partial sums of $\ga$ and $\gb$ respectively.

Before we delve into the definition of fundamental shuffles, we review overlapping shuffles, which will appear in the proof of Proposition~\ref{prop:fundProd}. Let $\ga$ and $\gb$ be two dotted compositions of length $\ell$ and $\ell'$ respectively. We make a $\ell'$
by $\ell$ grid and label the rows by $\gb$ and the columns by $\ga$.
If both $\ga_q$ and $\gb_p$ are dotted, then place a dot in the cell
in row $p$ and column $q$. The path $P$ in the $(x,y)$ plane
from $(0,0)$ to $(\ell,\ell')$ with steps $(0,1)$, $(1,0)$, and
$(1,1)$ is similar to the paths defined in \cite[Section 3.3.1]{LMW}.
In our case, paths are not allowed to step diagonally over cells where
both $\ga_q$ and $\gb_p$ are dotted.  Each path corresponds to a dotted composition $\gamma=\gamma(P)$. If the $h^{\text{th}}$ step of $P$
is horizontal over column $q$, then $\gamma_h=\ga_q$. If it is a  vertical step over row $p$, then $\gamma_h=\gb_p$. Finally, if it is a diagonal step over row $p$ and column $q$, then$\gamma_h=\ga_q+\gb_p$.  In all cases, $\gamma_h$ is dotted if either $\ga_q$ or $\gb_p$ is. We call the set of all
paths which can be obtained from $\ga$ and $\gb$ in this manner {\defstyle
  the set of $(\ga,\gb)$ overlapping shuffles}.

\omitt{\begin{color}{blue}We need
    something similar, but not the whole theory of $P$-partitions. I
    don't see how to extend the definition of linear extensions to the
    super case, so we have no theorem similar to Lemma 3.3.23 in
    \cite{LMW}.} We need dotted permutations, which are analogous to
$P$-partitions where the underlying poset is a chain.

\begin{definition}
A {\defstyle dotted permutation} is a permutation $w=\pl w_1,\ldots,w_n\pr$ of a
multisubset of $\Z$, where some of the entries may be dotted and where
the non-dotted entries form a subset of $\Z$; in particular, no dotted entry may be repeated. Write $w^{\circ}$ for
$w$ with the dotted entries deleted. The {\defstyle size} of
$w=\pl w_1,\ldots,w_n\pr$ is $\size(w)=n$.  If $w_i^{\circ}>w_{i+1}^{\circ}$
or if $w^{\circ}_i$ is followed by a dotted entry in $w$, then $i$ is
a {\defstyle descent} for $w$. The indices for the descents of $w$ come from
the indices of $w^{\circ}.$
\end{definition}
%\Susanna{Change length to size? \jan}
\begin{example}
Let $w=\pl6,7,8,\dot{6},\dot{3},9,10,3,\dot{1},2,4\pr$. Then $w^{\circ}=\pl6,7,8,9,10,3,2,4\pr$ and the descents of $w$ are $\{3, 5,6\}$.
\end{example}

We define a map $\comp(w)$ from dotted
permutations to dotted compositions. Restricted to non-dotted permutations, the map is the usual one. We need to be a little careful when inserting the dotted parts.

\begin{definition}
Let $w=\pl w_1,\ldots,w_n\pr$ be a dotted permutation with descents
$d_1<d_2<\cdots <d_k$ and such that $w^{\circ}$ is a permutation of a set
with $N$ elements. The non-dotted parts of $\comp(w)$ are
$(d_1,d_2-d_1,\dots,d_{k}-d_{k-1},N-d_k)$ in order. If the dotted entry
$w_i$ follows $w^{\circ}_{d_j}$ in $w$, then insert $w_i$ after
$d_j-d_{j-1}$ in $\comp(w)$. If a dotted entry follows another dotted
entry in $w$, then it should also follow it in $\comp(w)$.
\end{definition}

\begin{example}
Let $w=\pl6,7,8,\dot{6},\dot{3},9,10,3,\dot{1},2,4\pr$. Then
$\comp(w)=(3,\dot{6},\dot{3},2,1,\dot{1},2)$. Let
$w'=\pl6,7,9,\dot{6},\dot{3},8,10,1,\dot{1},2,3\pr$. Then $\comp(w')$ is
also equal to $(3,\dot{6},\dot{3},2,1,\dot{1},2)$.

\end{example}

Let $\ga$ be a dotted composition. Then we say the dotted permutation
$w_{\ga}$ {\defstyle represents} $\ga$ on the set $S$ if
$\comp(w_{\ga})=\ga$ and $w_{\ga}^{\circ}$ is a permutation of $S$.

We will again need paths to define shuffles, as in \cite{GR,LMW,FLP}, but
extend their definition.

Let $\ga$ and $\gb$ be two dotted compositions. Let $w_{\ga}$
represent $\ga$ on $S$ and $w_{\gb}$ represent $\gb$ on $T$, where
$s<t$ for all $s\in S$ and $t\in T$.
A {\defstyle $(\ga,\gb)$-grid} is a $\size(w_\gb)\times\size(w_\ga)$ grid in
the first quadrant where each row $i$ of cells is labeled by the entries of $w_\gb$ and the
columns by the entries of $w_{\ga}$. Cell $(i,j)$ is the cell in row $i$,
column $j$, with corners $(i-1,j-1)$, $(i-1,j)$, $(j,i-1)$, and
$(i,j)$. Row numbering starts with the bottom row of the grid, and column numbering with the leftmost column. A {\defstyle \newShuf path} in the grid is from
(0,0) to $(\size(w_\ga),\size(w_\gb))$.  We define a path by steps. Suppose the path has reached the southwest corner of cell $(i,j)$. There are four possible
steps for a \newShuf path:
\begin{enumerate}
\item \label{step1}horizontal (1,0);
\item \label{step2}vertical (0,1);
\item \label{step3}if $(w_{\ga})_j$ is a dotted entry and if the
  labels on successive rows of the grid are increasing and non-dotted:
  $(w_{\gb})_i<(w_{\gb})_{i+1}<\cdots<(w_{\gb})_{i+k-1}$, then the
  diagonal step $(i,j)\mapsto (i+k,j+1)$ is possible; and
\item \label{step4}if a $(w_{\gb})_i$ is a dotted entry and if the
  labels on successive
  columns of the grid are increasing and non-dotted:
  $(w_{\ga})_j<(w_{\ga})_{j+1}<\cdots<(w_{\ga})_{j+k-1}$, then the
  diagonal step $(i,j)\mapsto (i+1,j+k)$ is possible.
\end{enumerate}

We allow diagonal steps over more than one cell in the grid. In \cite{GR,LMW,FLP}, diagonal steps were over single cells. 
The set of paths we obtain does not depend on which permutations $w_{\ga}$ and $w_{\gb}$ we choose to represent $\ga$ and $\gb$.

Each path $P$ in a $(\ga,\gb)$-grid corresponds to a dotted
permutation $\Pi(P)$.  We now describe the correspondence. Let $P$ be a path in a $(\ga,\gb)$-grid. The bijection proceeds in steps, with $\Pi(P)$ initially an empty permutation and the steps of $P$ processed from lower left to upper right.  The number of entries in the dotted permutation $\Pi(P)$ 
will be the number of steps in $P$. For a horizontal step of $P$ in
column $j$, append the label $(w_{\ga})_{j}$ to the permutation. For a
vertical step in $P$ in row $i$ add the label $(w_{\gb})_{i}$ to the
permutation. Suppose the path takes a diagonal step of type (\ref{step3}),
where $\dot{a}$ labels column $j$. Then append $\dot{(a+k)}$ to the
permutation. Similarly, for a step of type (\ref{step4}), where
$\dot{b}$ labels row $i$, add $\dot{(b+k)}$ to the permutation. Please see Example~\ref{ex:bigEx} and Figure~\ref{fig:bijectionPi}.

The {\defstyle \newShuf shuffle set} of $\ga$ and $\gb$ is the set of dotted compositions
produced by $\comp(w)$ for dotted permutations $w$ from paths in the
$(\ga,\gb)$-grid. In other words, 
  the \newShuf shuffle set of $\ga$ and $\gb$ is the set
  $$
\left \{ \comp(w) \, |\, w=\Pi(P), \text{$P$ is a \newShuf path in the $(\ga,\gb)$-grid} \right \} .
  $$

\begin{example}
\label{ex:bijectionPi}
Let $\ga$ be the dotted composition $(1,1,\dot{3})$ and $\gb$ be $(1,\dot{0})$. Let $w_{\ga}=[4,3,\dot{3},2,6]$ and $w_{\gb}=[8,\dot{0},7]$ be the dotted permutations we choose to represent $\ga$ and $\gb$, respectively. In Figure~\ref{fig:bijectionPi}, we draw a path $P$ in the $(\ga,\gb)$-grid. There are six entries in the dotted permutation $\Pi(P)$, because $P$ has six steps. The first two steps are horizontal, so we append $(w_{\ga})_1$ and  $(w_{\ga})_2$, the vertical step contributes  $(w_{\gb})_1$, and the fourth step is horizontal and adds  $(w_{\ga})_3$. The fifth step of $P$ is a diagonal step of type \eqref{step4}, where $(w_{\gb})_2=\dot{0}$ labels row $2$ and $k=2$ is the number of columns whose labels are increasing. We add $\dot{2}$ to the permutation. Finally, we add $7$, so that $\Pi(P)=[4,3,8,\dot{3},\dot{2},7]$. The dotted composition corresponding to $P$ is $\Gamma(P)=\comp(\Pi(P))=(1,2,\dot{3},\dot{2},1)$.
\end{example}
\begin{figure}[h]
    \centering

\begin{tikzpicture}
\def\labelHt{-.45}
\def\permHt{1.6}
\begin{scope}
\node at (.5,\labelHt){$4$};
\node at (1.5,\labelHt){$3$};
\node at (2.5,\labelHt){$\dot{3}$};
\node at (3.5,\labelHt){$2$};
\node at (4.5,\labelHt){$6$};
\node at (-.5,.5){$8$};
\node at (-.5,1.5){$\dot{0}$};
\node at (-.5,2.5){$7$};
\draw[help lines] (0,0) grid (5,3);
\draw[red,line width=1.5pt](0,0)-- ++(1,0)-- ++(1,0)-- ++(0,1)-- ++(1,0)--++(2,1)--++(0,1);
\filldraw[blue] (2.5,1.5) circle (2pt);
%\node at (\permx,\permHt) [right] {$[\dot{1}, 1, 2, \dot{2}, 3]$};
%\node at (\permx,\desc)[right]{$\{2\}$};
%\node at (\permx,\comp)[right]{$(\dot{1},2,\dot{2},1)$};
\end{scope}
\end{tikzpicture}
 \caption{The grid and path described in Example~\ref{ex:bijectionPi}}
    \label{fig:bijectionPi}
\end{figure}
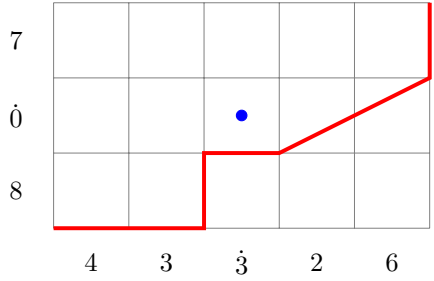

Keeping the notation used for overlapping shuffles, we will denote by
$\Gamma(P)$ the dotted composition corresponding to the \newShuf path $P$, that is,
$\Gamma(P)=\comp(\Pi(P))$.

Each \newShuf shuffle also has a sign. When both $(w_{\ga})_j$ and
$(w_{\gb})_i$ are dotted, put a dot in the square in the
$(i-1)^{\textrm{th}}$ row and $(j-1)^{\textrm{th}}$ column. The
{\defstyle sign} of the path/dotted permutation/dotted composition is
$(-1)^{\textrm{number of dots below the path}}$.

\begin{center}
\begin{figure}% [ht]
  \input{map4Mult}
  \caption{Multiplication of $\fund{(\dot{1},2)}$ and $\fund{(\dot{2},1)}$}
  \label{fig:mult}
  \end{figure}
\end{center}

\begin{proposition}\label{prop:fundProd}
\begin{equation}
\label{eq:fundProd}
\fund{\ga}\fund{\gb}=\sum_{\textrm{\newShuf shuffles $\gamma$ of $\ga$ and
      $\gb$}}\sign(\gamma)\fund{\gamma}.
\end{equation}
%\Luc{We also need a formula of the type
%  $
%\bar L_\alpha \bar L_\beta= \sum \pm \bar L_\gamma
%  $
%}
\end{proposition}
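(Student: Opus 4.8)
The plan is to establish the identity by expanding both sides of \eqref{eq:fundProd} in the monomial basis $\{M_\mu\}$ and comparing the coefficient of each $M_\mu$. By Definition~\ref{def:fundamental}, the right-hand side is
$$
\sum_{\gamma}\sign(\gamma)\,\fund{\gamma}=\sum_{\gamma}\sign(\gamma)\sum_{\mu\preccurlyeq\gamma}M_{\mu},
$$
the outer sum running over the \newShuf shuffles $\gamma$ of $\ga$ and $\gb$, so that the coefficient of a fixed $M_\mu$ on the right equals $\sum_{\gamma\succcurlyeq\mu}\sign(\gamma)$, summed over those \newShuf shuffles $\gamma$ that strongly coarsen $\mu$.

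For the left-hand side I would first write $\funda=\sum_{\delta\preccurlyeq\ga}M_\delta$ and $\fund{\gb}=\sum_{\epsilon\preccurlyeq\gb}M_\epsilon$, and then invoke the signed overlapping-shuffle formula for products of monomial quasisymmetric functions in superspace from \cite{FLP}, reviewed at the start of this section: each $M_\delta M_\epsilon$ is a signed sum of $M_{\gamma(P)}$ over the overlapping shuffles $P$ of $\delta$ and $\epsilon$. Hence
$$
\funda\fund{\gb}=\sum_{\substack{\delta\preccurlyeq\ga\\ \epsilon\preccurlyeq\gb}}\ \sum_{P}\sign(P)\,M_{\gamma(P)},
$$
and the coefficient of $M_\mu$ on the left equals $\sum_{P}\sign(P)$, summed over all overlapping shuffles $P$ of some $\delta\preccurlyeq\ga$ and $\epsilon\preccurlyeq\gb$ with $\gamma(P)=\mu$.

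The heart of the argument is then a sign-preserving bijection $\phi$, depicted in Figure~\ref{fig:mult}, between the two index sets. Concretely, for a \newShuf shuffle $\gamma=\Gamma(Q)$ arising from a path $Q$ in the $(\ga,\gb)$-grid and a refinement $\mu\preccurlyeq\gamma$, I would build an overlapping shuffle $P=\phi(\mu)$ of a suitable pair $\delta\preccurlyeq\ga$, $\epsilon\preccurlyeq\gb$ with $\gamma(P)=\mu$ and $\sign(P)=\sign(Q)=\sign(\gamma)$, and show that $\phi$ is a bijection onto the set of overlapping shuffles occurring on the left. Granting this, the coefficients of $M_\mu$ agree for every $\mu$, namely $\sum_{P:\,\gamma(P)=\mu}\sign(P)=\sum_{\gamma\succcurlyeq\mu}\sign(\gamma)$, which is exactly \eqref{eq:fundProd}. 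The map $\phi$ is read off from $Q$ together with the extra refining cuts recorded by $\mu\preccurlyeq\gamma$: each multi-cell diagonal step of the fundamental path (which fuses a dotted part with an increasing run, cf.\ steps \eqref{step3}--\eqref{step4}) is decomposed, according to where $\mu$ splits the corresponding part, into single-cell diagonal and straight steps of an overlapping-shuffle path over the refined compositions.

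The main obstacle is proving that $\phi$ is a genuine sign-preserving bijection, and two points require care. First, the grids differ in shape: the $(\ga,\gb)$-grid uses the fixed representatives $w_\ga,w_\gb$, whereas the overlapping-shuffle grids live over the refined $\delta,\epsilon$ and change size with $\mu$, so one must track exactly how the dots below the path are created and destroyed under refinement and verify that $\sign(P)=\sign(\gamma)$ on the nose rather than merely up to a global sign. Second, one must confirm that the prohibition against placing two dotted entries at a common index (the reason a diagonal step may not cross a doubly dotted cell, reflecting $\theta_i^2=0$) is respected identically on both sides, so that no spurious terms are created or cancelled. Checking injectivity and surjectivity then reduces to showing that the refining data $\mu\preccurlyeq\gamma$ determines, and is determined by, the placement of the single-cell steps of $P$; this bookkeeping, rather than any conceptual difficulty, is the source of the intricacy promised in the introduction.
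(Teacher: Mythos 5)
Your global strategy coincides with the paper's: expand both sides in the monomial basis, apply the signed overlapping-shuffle product formula for $M_\delta M_\epsilon$ from \cite{FLP} on the left, and reduce the identity to a sign-preserving bijection $\phi$ between pairs $(Q,\mu)$ with $\mu\preccurlyeq\Gamma(Q)$ ($Q$ a fundamental shuffle path in the $(\ga,\gb)$-grid) and triples $(P,(\delta,\epsilon))$ with $\delta\preccurlyeq\ga$, $\epsilon\preccurlyeq\gb$ and $P$ an overlapping $(\delta,\epsilon)$-shuffle, such that $\Gamma(P)=\mu$ and $\sign(P)=\sign(Q)$. That reduction is correct and is exactly how the paper frames its proof.

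The gap is in the one concrete thing you say about how $\phi$ acts. You propose to build $P$ by decomposing each multi-cell diagonal step of $Q$ ``according to where $\mu$ splits the corresponding part.'' But a multi-cell diagonal step (type \eqref{step3} or \eqref{step4}) produces a \emph{dotted} part of $\Gamma(Q)$, and the order $\preccurlyeq$ only ever splits \emph{non-dotted} parts; so $\mu$ never cuts those parts, and the image of such a step is in fact independent of $\mu$ (it becomes a single-cell diagonal of $P$ over a cell labeled by the dotted entry on one side and by the run length $k$ on the other). The $\mu$-dependent case --- the one the paper calls the more difficult one, and the one your sketch omits --- is a non-dotted part $\gamma_h$ of $\Gamma(Q)$. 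Such a part corresponds to an increasing run $w_{x+1}<\cdots<w_{x+\gamma_h}$ in $\Pi(Q)$ whose first $a$ entries come from $w_\ga$ and whose last $\gamma_h-a$ entries come from $w_\gb$ (a horizontal run followed by a vertical run in $Q$, which creates no descent precisely because every undotted entry of $w_\ga$ is smaller than every undotted entry of $w_\gb$). When $\mu$ splits $\gamma_h$ into $\mu_i,\dots,\mu_j$, one must send the pieces lying entirely within the first $a$ positions to $\delta$, the pieces lying entirely within the last $\gamma_h-a$ positions to $\epsilon$, and split the unique piece straddling position $a$ (if any) into a part of $\delta$ and a part of $\epsilon$ that overlap in a single-cell diagonal step of $P$. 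This allocation is what makes $\Gamma(P)=\mu$, what guarantees $\delta\preccurlyeq\ga$ and $\epsilon\preccurlyeq\gb$, and what makes $\phi$ invertible (the value of $a$ is recoverable from $P$); without it the map is simply not defined, and it is also where the single-cell overlaps of $P$ in the non-dotted case actually come from. Your two cautionary points (exact sign preservation and the prohibition on doubly dotted overlaps) are the right ones to flag, but they are secondary to supplying this construction.
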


\begin{example}
\label{ex:fundProd}
 The goal of this example and Figure~\ref{fig:mult} is to illustrate  Proposition~\ref{prop:fundProd} and its proof. In the first column of Figure~\ref{fig:mult} we have all \newShuf paths $Q$ in $(\ga,\gb)$-grid for
$\ga=(\dot{1},2)$ and $\gb=(\dot{2},1)$. The dotted permutation
$\pl\dot{1},1,2\pr$ was used to represent $\ga$ and the dotted permutation
$\pl\dot{2},3\pr$ to represent $\gb$. In the second column, to the right of
each path, are the dotted permutation corresponding to the path, its descent set, and the dotted composition $\gamma=\Gamma(Q)$ corresponding to the path. In the third column, we have all the dotted compositions $\gamma'$ such that $\Gamma(Q)=\gamma\compgeq\gamma'$.  The proof of Proposition~\ref{prop:fundProd} depends on a bijection $\phi$. In the rightmost column, we have $\phi(\gamma')$ for each $\gamma'$ in the third column.

%\end{example}

%\begin{example} This is a continuation of Example~\ref{ex:fundProd}.
  According to Proposition~\ref{prop:fundProd},
  \begin{eqnarray*}
    \fund{(\dot{1},2)}\fund{(\dot{2},1)}&=&\fund{(\dot{1},2,\dot{2},1)} +\fund{(\dot{1},1,\dot{2},2)} +\fund{(\dot{1},1,\dot{2},1,1)}+\fund{(\dot{1},1,\dot{3},1)}+\fund{(\dot{1},\dot{2},3)}+\fund{(\dot{1},\dot{2},2,1)}+\fund{(\dot{1},\dot{2},1,2)}+\fund{(\dot{1},\dot{3},2)}\\
    &&+\fund{(\dot{1},\dot{3},1,1)}+\fund{(\dot{1},\dot{4},1)}-\fund{(\dot{2},\dot{1},3)}-\fund{(\dot{2},\dot{1},2,1)}-\fund{(\dot{2},\dot{1},1,2)}-\fund{(\dot{2},1,\dot{1},2)}-\fund{(\dot{2},\dot{2},2)} .
    \end{eqnarray*}
  \end{example}

\begin{proof}[Proof of Proposition~\ref{prop:fundProd}]
  By definition of fundamental quasisymmetric functions in superspace (Definition~\ref{def:fundamental}) and Proposition 5.5 of \cite{FLP},
  the proposition amounts to
  \begin{equation}
  \label{eq:prodproof}
\sum_{\alpha' \preccurlyeq \alpha} \sum_{\beta' \preccurlyeq \beta} \sum_{P} \sign(P)  M_{\Gamma(P)}  = \sum_{\textrm{\newShuf shuffles $\gamma$ of $\ga$ and
      $\gb$}}\sign(\gamma)\sum_{\gamma' \preccurlyeq \gamma} M_{\gamma'},
  \end{equation}  
where the sum on the left is over all $P$ that are overlapping  $(\ga',\gb')$ shuffles.
In view of the definition of \newShuf $(\ga,\gb)$ shuffles, the proposition will hold if we can obtain a bijection $ \phi$
between
the set $S_1$ of indices on the right hand side of \eqref{eq:prodproof}
$$S_1:=\{(Q,\gamma'):\gamma' \preccurlyeq \Gamma(Q),Q\text{ is a
   \newShuf $(\ga,\gb)$ shuffle}\}$$
  and the set $S_2$ of indices on the left hand side of \eqref{eq:prodproof}
  $$S_2:=\{(P,(\ga',\gb')):\ga' \preccurlyeq \ga ,\gb' \preccurlyeq \gb,P\text{
    is a $(\ga',\gb')$ overlapping shuffle}\}$$
  with the property that $\phi(Q,\gamma')=(P,(\ga',\gb'))$ is such that
  $\Gamma(P)=\gamma'$ and
  $\sign(P)=\sign(Q)$. Reminder: paths of fundamental shuffles are displayed on a grid where the rows and columns are labeled by permutations that represent dotted compositions, whereas paths of overlapping shuffles are displayed on a grid where the rows and columns are labeled by the compositions themselves. 
  
Consider an element of $S_1$: we are given $Q$, a path
on an $(\ga,\gb)$-grid, and $\gamma'\preccurlyeq\gamma=\Gamma(Q)$.  We
deform $Q$ into $P$ based on $\gamma'$ and build $\ga'$ and $\gb'$
along the way. Both $\ga'$ and $\gb'$ are initially empty and we add parts to them at each step. At step $h$, we consider $\gamma_h$, the $h^{\text{th}}$ part
$\gamma=\Gamma(Q)$. Assume we have constructed the initial parts of
$\ga'$ and the initial parts of $\gb'$ based on
$\gamma_1,\ldots,\gamma_{h-1}$.

We begin with the more difficult case: 
$\gamma_h$ is not dotted. Since $\gamma'$ refines $\gamma$ and $w$ represents $\gamma$, there are
integers $i$, $j$, and $x$ such that
$\gamma_h=\gamma'_i+\cdots+\gamma'_j$ and $\gamma_h$ corresponds to
$w_{x+1}<w_{x+2}<\cdots<w_{x+\gamma_h}$,
where $w=\Pi(Q)$ and the following holds.
\begin{itemize}
\item either $w_{x+1}$ is dotted or $w_x>w_{x+1}$, and
\item either $w_{x+\gamma_k+1}$ is
  dotted or $w_{x+\gamma_h}>w_{x+\gamma_h+1}$ or $x+\gamma_h=\size(w)$.
  %\Susanna{Used size here\july} 
\end{itemize}

We must determine how to allocate $\gamma'_i,\gamma'_{i+1},\ldots,\gamma'_j$ between $\alpha'$ and $\beta'$.

Since the undotted entries in $w_{\ga}$ are all less than the undotted
entries in $w_{\gb}$, there is an integer $a$, $0\leq
a\leq \gamma_h$ such that
$w_{x+1},\ldots,w_{x+a}$ are all entries from $w_{\ga}$ and
$w_{x+a+1},\ldots,w_{x+\gamma_h}$ are all entries from
$w_{\gb}$.

Suppose $\gamma'_1+\cdots+\gamma'_k\leq a$ and either
$\gamma'_1+\cdots+\gamma'_{k+1}> a$ or $k=j$. Add the parts
$\gamma'_i,\ldots,\gamma'_k,a-(\gamma'_i+\cdots+\gamma'_k)$ to the
$\ga'$ we are building and add
$\gamma'_i+\cdots+\gamma'_{k+1}-a,\gamma'_{k+2},\ldots,\gamma'_j$ to
the $\gb'$ we are building.

$$
\text{entries of $w$:\quad}\overbrace{\underbrace{w_{x+1}\ldots}_{\gamma'_i}\underbrace{w_{x+\gamma'_i+1}\ldots}_{\gamma'_{i+1}}\dots\underbrace{\ldots w_{x+a}}_{a-(\gamma'_i+\cdots+\gamma'_k)}}^{\text{entries of $w$ from $w_{\ga}$}}\overbrace{\underbrace{w_{x+a+1}\ldots}_{\gamma'_i+\cdots+\gamma'_{k+1}-a}\underbrace{w_{x+\gamma'_i+\cdots+\gamma'_{k+1}+1}\ldots}_{\gamma'_{k+2}}\ldots\underbrace{\ldots w_{x+\gamma_h}}_{\gamma'_j}}^{\text{entries of $w$ from $w_{\gb}$}}
$$

The path $P$ will be horizontal over the
columns labeled with $\gamma'_i,\ldots,\gamma'_k$, diagonal over the
cell whose column label is $a-(\gamma'_i+\cdots+\gamma'_k)$ and row
label is $\gamma'_i+\cdots+\gamma'_{k+1}-a$ and vertical over the rows
labeled $\gamma'_{k+2},\ldots,\gamma'_j$.  Note that if
    $a-(\gamma'_i+\cdots+\gamma'_k)$ or $\gamma'_i+\cdots+\gamma'_{k+1}-a$ is equal to zero, then the path will be vertical or horizontal respectively instead of
diagonal over the corresponding columns and rows.
See Figure~\ref{fig:noDotPortion} to see the effect of $\phi$ on the portion of $Q$ corresponding to the undotted part $\gamma_h.$ By construction, $\ga'$ refines $\ga$ and $\gb'$ refines $\gb$.

\begin{center}
  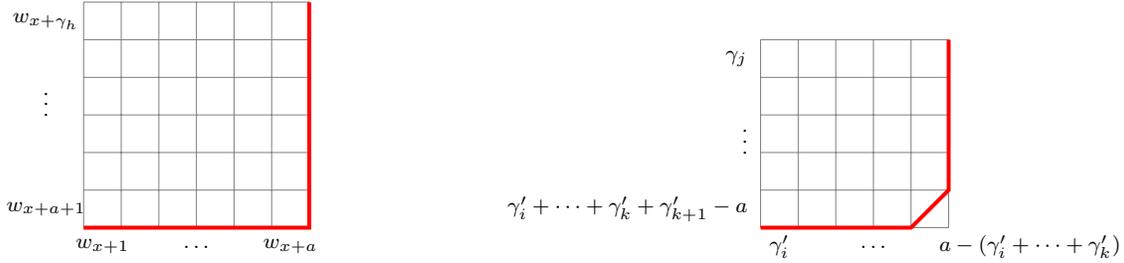
\begin{figure}[ht]
\begin{tikzpicture}[scale=.5,font=\footnotesize]
\begin{scope}[shift={(0,0)}]
\node at (.5,-.5){$w_{x+1}$};
%\node at (1.5,-.5){$w_{x+2}$};
\node at (3.,-.5){$\hdots$};
\node at (5.5,-.5){$w_{x+a}$};

\node at (-1,.5){$w_{x+a+1}$};
%\node at (-1,1.5){$w_{x+a+2}$};
\node at (-1,3.5){$\vdots$};
\node at (-1,5.5){$w_{x+\gamma_h}$};

\draw[help lines] (0,0) grid (6,6);
\draw[red,line width=1.5pt](0,0)-- ++(1,0)-- ++(1,0)-- ++(1,0)-- ++(1,0)-- ++(1,0)-- ++(1,0)-- ++(0,1)-- ++(0,1)-- ++(0,1)-- ++(0,1)-- ++(0,1)-- ++(0,1)   ;
%\filldraw[blue] (.5,.5) circle (2pt);
\end{scope}
\begin{scope}[shift={(18,0)}]
\node at (.5,-.5){$\gamma'_i$};
%\node at (1.5,-.5){$w_{x+2}$};
\node at (3.,-.5){$\hdots$};
\node at (4.5,-.5)[right]{$a-(\gamma'_i+\cdots+\gamma'_k)$};

\node at (-.1,.5)[left]{$\gamma'_i+\cdots+\gamma'_k+\gamma'_{k+1}-a$};
%\node at (-1,1.5){$w_{x+a+2}$};
\node at (-.1,2.5)[left]{$\vdots$};
\node at (-.1,4.5)[left]{$\gamma_j$};

\draw[help lines] (0,0) grid (5,5);
\draw[red,line width=1.5pt](0,0)-- ++(1,0)-- ++(1,0)-- ++(1,0)-- ++(1,0)-- ++(1,1)-- ++(0,1)-- ++(0,1)-- ++(0,1)-- ++(0,1);
%\filldraw[blue] (.5,.5) circle (2pt);
\end{scope}

\end{tikzpicture}

  \label{fig:noDotPortion}
  \caption{The map $\phi$ deforms the portion of $Q$ on the left into
    a portion of $P$ and parts of $\ga'$ and $\gb'$ on the right.}
      \end{figure}
  \end{center}

Now suppose $\gamma_h$ is dotted. If it is produced by a step of
type \eqref{step1}, add $\gamma_h$ to $\ga'$ and $P$ will be horizontal for its next step. Likewise, if is
produced by a step of type \eqref{step2}, add $\gamma_h$ to
$\gb'$ and a vertical step to $P$. Otherwise, it is produced by a step of type \eqref{step3}
or \eqref{step4}. Assume it is of type
\eqref{step3}. Then add $(w_\ga)_i$ (dotted) to $\ga'$ as a part and
add $k$ to $\gb'$, where the notation is as in the description of
shuffles. The path $P$ will step diagonally over the cell whose column
label is $(w_\ga)_i$ and whose row label is $k$. Again, the
construction ensures that $\ga'$ and $\gb'$ refine $\ga$ and
$\gb$. See Figure~\ref{fig:dotPortion}.

It is not difficult to see that these steps can be reversed and $\phi$ is a bijection.
\begin{center}
  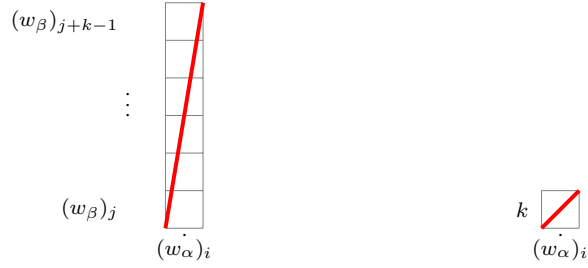
\begin{figure}[ht]
\begin{tikzpicture}[scale=.5,font=\footnotesize]
\begin{scope}[shift={(0,0)}]
\node at (.5,-.5){$\dot{(w_{\ga})_i}$};

\node at (-1,.5)[left]{$(w_{\gb})_j$};
%\node at (-1,1.5){$w_{x+a+2}$};
\node at (-1,3.5){$\vdots$};
\node at (-1,5.5)[left]{$(w_{\gb})_{j+k-1}$};

\draw[help lines] (0,0) grid (1,6);
\draw[red,line width=1.5pt](0,0)-- ++(1,6);
%\filldraw[blue] (.5,.5) circle (2pt);
\end{scope}

\begin{scope}[shift={(10,0)}]
\node at (.5,-.5){$\dot{(w_{\ga})_i}$};

\node at (-.1,.5)[left]{$k$};

\draw[help lines] (0,0) grid (1,1);
\draw[red,line width=1.5pt](0,0)-- ++(1,1);
%\filldraw[blue] (.5,.5) circle (2pt);
\end{scope}

\end{tikzpicture}

  \label{fig:dotPortion}
  \caption{The map $\phi$ deforms the portion of $Q$ on the left into a portion of $P$ and parts of $\ga'$ and $\gb'$ on the right, in the dotted case.}
      \end{figure}
  \end{center}

\end{proof}

\begin{example} This is a continuation of Example~\ref{ex:fundProd}. 
  In Figure~\ref{fig:mult}, the dotted compositions $\gamma'$ in the
  third column, together with the corresponding path $Q$ in the first
  column are matched by $\phi$ to the respective path in the fourth
  column. The $(\ga',\gb')$ pair which refines $(\ga,\gb)$ is given by
  the labels on the grid containing the path $P$. In this example,
  $\ga=(\dot{2},1)$ and $\gb=(\dot{1},2)$.
  \end{example}

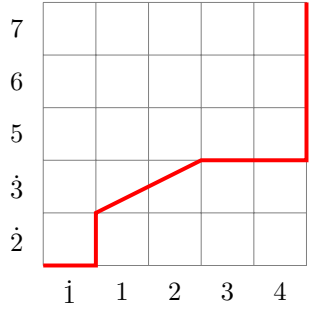
\begin{figure}[h]%{0.38\textwidth}
\centering
\begin{tikzpicture}[scale=.7]
\def\xsp{1}
\def\ysp{1}
\def\xbegin{.5}
\def\ybegin{.5}
 \draw[help lines] (0,0) grid (5,5);
 %col labels
\node at (\xbegin,-\ybegin) {$\dot{1}$};
\node at (\xbegin+\xsp,-\ybegin) {$1$};
\node at (\xbegin+2*\xsp,-\ybegin) {$2$};
\node at (\xbegin+3*\xsp,-\ybegin) {$3$};
\node at (\xbegin+4*\xsp,-\ybegin) {$4$};

%row labels
\node at (-\xbegin,\ybegin) {$\dot{2}$};
\node at (-\xbegin,\ybegin+1*\ysp) {$\dot{3}$};
\node at (-\xbegin,\ybegin+2*\ysp) {$5$};
\node at (-\xbegin,\ybegin+3*\ysp) {$6$};
\node at (-\xbegin,\ybegin+4*\ysp) {$7$};

\draw[red,line width=1.5pt](0,0)-- ++(1,0)-- ++(0,1)-- ++(2,1)-- ++(2,0)-- ++(0,3);
\end{tikzpicture}
\caption{The path $Q$ representing the fundamental shuffle discussed in Example~\ref{ex:bigEx}. It appears in the expansion of $L_{(\dot{1},4)}$ and $L_{(\dot{2},\dot{3},3)}$.}
\label{fig:Q}
\end{figure}

\begin{example}\label{ex:bigEx}
In this example, we let $\ga=(\dot{1},4)$, $\gb=(\dot{2},\dot{3},3)$,
$w_{\ga}=\pl\dot{1},1,2,3,4\pr$, and
$w_{\gb}=\pl\dot{2},\dot{3},5,6,7\pr$. We let $Q$ be the fundamental shuffle whose
path is given Figure~\ref{fig:Q}. We have
$w=\Pi(Q)=\pl\dot{1},\dot{2},\dot{5},3,4,5,6,7\pr$ and
$\gamma=\Gamma(Q)=(\dot{1},\dot{2},\dot{5},5)$. We consider two examples of dotted compositions $\gamma'$ such that
$\gamma'\preccurlyeq\gamma$ and calculate their image under the map described in the proof of Proposition~\ref{prop:fundProd}.
\begin{enumerate}
\item Let $\gamma'=\gamma=(\dot{1},\dot{2},\dot{5},5)$. Initially $\ga'=\gb'=\emptyset$.
\begin{enumerate}
\item $\gamma_1=\dot{1}$ and $Q$'s first step is horizontal. Add $\dot{1}$ to $\ga'$. $P$'s first step is horizontal.
\item $\gamma_2=\dot{2}$ and $Q$'s second step is vertical. Add $\dot{2}$ to $\gb'$. $P$'s second step is vertical.
\item Step 3 is a diagonal step of type \ref{step4}. We add $\dot{3}$ to $\gb'$ and $2$ to $\ga'$. $P$'s third step is diagonal.
\item We have $\gamma_4=5$, $x=3$, and $w_{x+1},w_{x+2},\ldots,w_{x+\gamma_4}=3,4,5,6,7$ and $i=j=4$. Since $w_4$ and $w_5$ come from $w_{\ga}$, $a=2$ and we add $2$ to $\ga'$ and $3$ to $\gb'$. $P$'s final step is diagonal. See Figure~\ref{fig:QSteps}.
\end{enumerate}
The overlapping shuffle which corresponds to $P$ and $(\ga',\gb')$ is $\gamma'$.
\item Let $\gamma'=(\dot{1},\dot{2},\dot{5},1,2,1,1)$. Initially $\ga'=\gb'=\emptyset$.

\begin{enumerate}
\item $\gamma_1=\dot{1}$ and $Q$'s first step is horizontal. Add $\dot{1}$ to $\ga'$. $P$'s first step is horizontal.
\item $\gamma_2=\dot{2}$ and $Q$'s second step is vertical. Add $\dot{2}$ to $\gb'$. $P$'s second step is vertical.
\item Step 3 is a diagonal step of type \ref{step4}. We add $\dot{3}$ to $\gb'$ and $2$ to $\ga'$. $P$'s third step is diagonal.
\item We have $h=4$, $i=4$, and $j=7$ in the fourth step. Again we have $a=2$. We add $\gamma'_4,a-\gamma'_4$ to $\ga'$ and $\gamma'_4+\gamma'_5-a,\gamma'_6,\gamma'_7$ to $\gb'$. $P$ will be horizontal over the column labelled by $\gamma'_4$ ($1$), diagonal over the column labelled $by a-\gamma'_4$ and row labelled by $\gamma'_4+\gamma'_5-a$, and vertical over the rows labelled by $\gamma'_6$ and $\gamma'_7. $ See Figure~\ref{fig:QStepsSecond}.  
\end{enumerate}
Again, the overlapping shuffle which corresponds to $P$ and $(\ga',\gb')$ is $\gamma'$.
\end{enumerate}
\end{example}

\begin{figure}[h]
\centering
\begin{tikzpicture}[scale=.7]
\def\xsp{1}
\def\ysp{1}
\def\xbegin{.3}
\def\ybegin{.5}

\begin{scope}
 \draw[help lines] (0,0) grid (1,0);
 %col labels
\node at (\xbegin,-\ybegin) {$\dot{1}$};
%\node at (\xbegin+\xsp,-\ybegin) {$1$};
%\node at (\xbegin+2*\xsp,-\ybegin) {$2$};
%\node at (\xbegin+3*\xsp,-\ybegin) {$3$};
%\node at (\xbegin+4*\xsp,-\ybein) {$4$};

%row labels
%\node at (-\xbegin,\ybegin) {$\dot{2}$};

\draw[red,line width=1.5pt](0,0)-- ++(1,0);%-- ++(0,1)-- ++(2,1)-- ++(2,0)-- ++(0,2);
\end{scope}

\begin{scope}[shift={(2,0)}]
 \draw[help lines] (0,0) grid (1,1);
 %col labels
\node at (\xbegin,-\ybegin) {$\dot{1}$};
%\node at (\xbegin+\xsp,-\ybegin) {$1$};
%\node at (\xbegin+2*\xsp,-\ybegin) {$2$};
%\node at (\xbegin+3*\xsp,-\ybegin) {$3$};
%\node at (\xbegin+4*\xsp,-\ybein) {$4$};

%row labels
\node at (-\xbegin,\ybegin) {$\dot{2}$};

\draw[red,line width=1.5pt](0,0)-- ++(1,0)-- ++(0,1);%-- ++(2,1)-- ++(2,0)-- ++(0,2);
\end{scope}

\begin{scope}[shift={(4,0)}]
 \draw[help lines] (0,0) grid (2,2);
 %col labels
\node at (\xbegin,-\ybegin) {$\dot{1}$};
\node at (\xbegin+\xsp,-\ybegin) {$2$};
%\node at (\xbegin+2*\xsp,-\ybegin) {$2$};
%\node at (\xbegin+3*\xsp,-\ybegin) {$3$};
%\node at (\xbegin+4*\xsp,-\ybein) {$4$};

%row labels
\node at (-\xbegin,\ybegin) {$\dot{2}$};
\node at (-\xbegin,\ybegin+\ysp) {$\dot{3}$};

\draw[red,line width=1.5pt](0,0)-- ++(1,0)-- ++(0,1)-- ++(1,1);%-- ++(2,0)-- ++(0,2);
\end{scope}

\begin{scope}[shift={(7,0)}]
 \draw[help lines] (0,0) grid (3,3);
 %col labels
\node at (\xbegin,-\ybegin) {$\dot{1}$};
\node at (\xbegin+\xsp,-\ybegin) {$2$};
\node at (\xbegin+2*\xsp,-\ybegin) {$2$};
%\node at (\xbegin+3*\xsp,-\ybegin) {$3$};
%\node at (\xbegin+4*\xsp,-\ybein) {$4$};

%row labels
\node at (-\xbegin,\ybegin) {$\dot{2}$};
\node at (-\xbegin,\ybegin+\ysp) {$\dot{3}$};
\node at (-\xbegin,\ybegin+2*\ysp) {$3$};

\draw[red,line width=1.5pt](0,0)-- ++(1,0)-- ++(0,1)-- ++(1,1)-- ++(1,1);-- ++(0,2);
\end{scope}
\end{tikzpicture}
\caption{The construction of the path $P$, $\ga'$, $\gb'$ which corresponds to $Q$, $\ga$, $\gb$, $\gamma$, and $\gamma'=(\dot{1},\dot{2},\dot{5},5)$ in Example~\ref{ex:bigEx}.} 
\label{fig:QSteps}
\end{figure}

\begin{figure}[h]
\centering
\begin{tikzpicture}[scale=.7]
\def\xsp{1}
\def\ysp{1}
\def\xbegin{.3}
\def\ybegin{.5}

\begin{scope} %step 1
 \draw[help lines] (0,0) grid (1,0);
 %col labels
\node at (\xbegin,-\ybegin) {$\dot{1}$};
%\node at (\xbegin+\xsp,-\ybegin) {$1$};
%\node at (\xbegin+2*\xsp,-\ybegin) {$2$};
%\node at (\xbegin+3*\xsp,-\ybegin) {$3$};
%\node at (\xbegin+4*\xsp,-\ybein) {$4$};

%row labels
%\node at (-\xbegin,\ybegin) {$\dot{2}$};

\draw[red,line width=1.5pt](0,0)-- ++(1,0);%-- ++(0,1)-- ++(2,1)-- ++(2,0)-- ++(0,2);
\end{scope}

\begin{scope}[shift={(2,0)}] %step 2
 \draw[help lines] (0,0) grid (1,1);
 %col labels
\node at (\xbegin,-\ybegin) {$\dot{1}$};
%\node at (\xbegin+\xsp,-\ybegin) {$1$};
%\node at (\xbegin+2*\xsp,-\ybegin) {$2$};
%\node at (\xbegin+3*\xsp,-\ybegin) {$3$};
%\node at (\xbegin+4*\xsp,-\ybein) {$4$};

%row labels
\node at (-\xbegin,\ybegin) {$\dot{2}$};

\draw[red,line width=1.5pt](0,0)-- ++(1,0)-- ++(0,1);%-- ++(2,1)-- ++(2,0)-- ++(0,2);
\end{scope}

\begin{scope}[shift={(4,0)}] %step 3
 \draw[help lines] (0,0) grid (2,2);
 %col labels
\node at (\xbegin,-\ybegin) {$\dot{1}$};
\node at (\xbegin+\xsp,-\ybegin) {$2$};
%\node at (\xbegin+2*\xsp,-\ybegin) {$2$};
%\node at (\xbegin+3*\xsp,-\ybegin) {$3$};
%\node at (\xbegin+4*\xsp,-\ybein) {$4$};

%row labels
\node at (-\xbegin,\ybegin) {$\dot{2}$};
\node at (-\xbegin,\ybegin+\ysp) {$\dot{3}$};

\draw[red,line width=1.5pt](0,0)-- ++(1,0)-- ++(0,1)-- ++(1,1);%-- ++(2,0)-- ++(0,2);
\end{scope}

\begin{scope}[shift={(7,0)}] %step 4
 \draw[help lines] (0,0) grid (4,5);
 %col labels
\node at (\xbegin,-\ybegin) {$\dot{1}$};
\node at (\xbegin+\xsp,-\ybegin) {$2$};
\node at (\xbegin+2*\xsp,-\ybegin) {$1$};
\node at (\xbegin+3*\xsp,-\ybegin) {$1$};
%\node at (\xbegin+4*\xsp,-\ybein) {$4$};

%row labels
\node at (-\xbegin,\ybegin) {$\dot{2}$};
\node at (-\xbegin,\ybegin+\ysp) {$\dot{3}$};
\node at (-\xbegin,\ybegin+2*\ysp) {$1$};
\node at (-\xbegin,\ybegin+3*\ysp) {$1$};
\node at (-\xbegin,\ybegin+4*\ysp) {$1$};

\draw[red,line width=1.5pt](0,0)-- ++(1,0)-- ++(0,1)-- ++(1,1)-- ++(1,0)-- ++(1,1)-- ++(0,2);
\end{scope}

\end{tikzpicture}
\caption{The construction of the path $P$, $\ga'$, $\gb'$ which correspond to $Q$, $\ga$, $\gb$, $\gamma$, and $\gamma'=(\dot{1},\dot{2},\dot{5},1,2,1,1)$ in Example~\ref{ex:bigEx}.} 
\label{fig:QStepsSecond}
\end{figure}

\section{Antipode}
\label{sec:antipodes}
In the non-dotted case, the antipode of $L_\alpha$ is simply 
$L_{\alpha^t}$ (up to a sign), where
$\alpha^t$ is the composition obtained by transposing the ribbon associated to $\alpha$. As we will see, the antipode of $L_\alpha$ is not that simple in the dotted case.  Fortunately, we can define two operations on   ${\rm sQSym}$, whose compatibility with the antipode allows  to decompose the coproduct into simple terms (see \eqref{eqdecomp} and Proposition~\ref{propcolumn}).  

Recall that for $\alpha=(\alpha_1,\dots,\alpha_r)$ and $\beta=(\beta_1,\beta_2,\dots,\beta_q)$, we have 
$\alpha \odot \beta= (\alpha_1,\dots,\alpha_{r-1},\alpha_r+\beta_1,\beta_2,\dots,\beta_q)$ if at most one of $\alpha_r$ and $\beta_1$ is dotted (that is, two dotted entries cannot overlap). Note that $\dot a + b$ and $a +\dot b$ are both defined to be equal to $\dot c$, where $c=a+b$.  

Two new operations will prove essential to obtain the antipode of a fundamental quasisymmetric function in superspace.
Let  $\bullet$ and $\odot$ be both bilinear products defined on the monomials symmetric functions in superspace as
\begin{equation} \label{mon1}
  M_{\alpha}\bullet 
  M_{\beta}= M_{\alpha \cdot \beta}
\end{equation}  
and
\begin{equation} \label{mon2}
M_{\alpha}\odot M_{\beta}= M_{\alpha \odot \beta},
\end{equation}
which are both obviously associative and noncommutative. We note that if the last entry of $\alpha$ and the first entry of
$\beta$ are both dotted, then we simply let  $M_{\alpha}\odot M_{\beta}=0$.
\begin{remark}
 The ring of noncommutative symmetric functions in superspace, ${\rm sNSym}$, has a basis $H_\alpha$, indexed by dotted compositions, whose product is such that $H_{\alpha} H_\beta=H_{\alpha \cdot \beta}$ \cite{FLP}.  The ring ${\rm sNSym}$, which is generated by
 $$\{H_1,H_2,\dots; H_{\dot 0},H_{\dot 1},\dots \},$$
is also a Hopf algebra whose  coproduct and antipode are respectively such that \cite{FLP} 
 $$
\Delta' (H_n) = \sum_{k+\ell=n} H_k \otimes H_\ell, \qquad 
\Delta' (H_{\dot n}) = \sum_{k+\ell=n} ( H_{\dot k} \otimes H_\ell + H_{k} \otimes H_{\dot \ell})
 $$
and \cite{AGM} 
$$S'(H_{n})=  \sum_{\gamma\,  \trianglelefteq \,  (n)}(-1)^{\ell(\gamma)} H_{\gamma},
\qquad 
S'(H_{\dot n})=  \sum_{\gamma\,  \trianglelefteq \,  (\dot n)}(-1)^{\ell(\gamma)} H_{\gamma} .
$$
The ring  ${\rm sNSym}$ with this Hopf algebra structure is in fact dual to the 
 Hopf algebra structure on ${\rm sQSym}$ (whose product, coproduct and antipode are those considered in this article). In view of \eqref{mon1}, there is a dual Hopf algebra structure on the ring of quasisymmetric functions in superspace in which the product is given by $\bullet$ and where the coproduct $\Delta''$ and antipode $S''$ are simply obtained by replacing $H$ by $M$ in the previous equations:
 $$
\Delta'' (M_n) = \sum_{k+\ell=n} M_k \otimes M_\ell, \qquad 
\Delta'' (M_{\dot n}) = \sum_{k+\ell=n} (M_{\dot k} \otimes M_\ell + M_{k} \otimes M_{\dot \ell})
 $$
and 
$$S''(M_{n})=  \sum_{\gamma\,  \trianglelefteq \,  (n)}(-1)^{\ell(\gamma)} M_{\gamma},
\qquad 
S''(M_{\dot n})=  \sum_{\gamma\,  \trianglelefteq \,  (\dot n)}(-1)^{\ell(\gamma)} M_{\gamma} .
$$
We do not know whether the product $\odot$ is also associated to a natural Hopf algebra on the ring of quasisymmetric functions in superspace. 
\end{remark}

The multiplication of  
fundamental quasisymmetric functions in superspace using   these two operations
satisfies simple relations. 
\begin{proposition} \label{prop51}
  Let $\alpha$ and $\beta$ be dotted compositions. We have
     \begin{equation} \label{propo3}
    L_{\alpha}\bullet L_{\beta}= L_{\alpha \cdot \beta}.
  \end{equation}
     Moreover, if the last entry of $\alpha$ and the first entry of $\beta$ are both non-dotted, then
\begin{equation} \label{propo4}
  L_{\alpha}\bullet L_{\beta}+L_{\alpha}\odot L_{\beta}= L_{\alpha \odot \beta}.
\end{equation}  
\end{proposition}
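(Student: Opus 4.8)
The plan is to expand both sides in the monomial basis and reduce each identity to a bijection between index sets, using the combinatorial characterization of $\preccurlyeq$ via the triples $\bigl(D(\cdot),E(\cdot),F(\cdot)\bigr)$ recorded in \eqref{precequiv}. First I would record how these three sets transform under the two concatenation operations. Writing $N$ for the total number of slots of $\alpha$ (its total degree plus its fermionic degree), a direct computation of partial sums gives
\begin{align*}
D(\alpha\cdot\beta)&=D(\alpha)\cup\{N\}\cup\bigl(N+D(\beta)\bigr),\\
D(\alpha\odot\beta)&=D(\alpha)\cup\bigl(N+D(\beta)\bigr),
\end{align*}
together with $E(\alpha\cdot\beta)=E(\alpha\odot\beta)=E(\alpha)\cup(N+E(\beta))$ and $F(\alpha\cdot\beta)=F(\alpha\odot\beta)=F(\alpha)\cup(N+F(\beta))$, where the near-concatenation equalities use the hypothesis that the last entry of $\alpha$ and the first entry of $\beta$ are non-dotted, so that no dotted part straddles slot $N$. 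The single difference between $\alpha\cdot\beta$ and $\alpha\odot\beta$ is thus whether $N$ is a descent.

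For \eqref{propo3}, since $\bullet$ is bilinear with $M_{\alpha'}\bullet M_{\beta'}=M_{\alpha'\cdot\beta'}$, the left side is $\sum_{\alpha'\preccurlyeq\alpha,\ \beta'\preccurlyeq\beta}M_{\alpha'\cdot\beta'}$ while the right side is $\sum_{\gamma\preccurlyeq\alpha\cdot\beta}M_\gamma$. I would show the map $(\alpha',\beta')\mapsto\alpha'\cdot\beta'$ is a bijection onto $\{\gamma:\gamma\preccurlyeq\alpha\cdot\beta\}$: every such $\gamma$ satisfies $N\in D(\alpha\cdot\beta)\subseteq D(\gamma)$, so $\gamma$ breaks uniquely at slot $N$ into a concatenation $\alpha'\cdot\beta'$, and the displayed formulas, restricted below and above $N$, translate the condition $\gamma\preccurlyeq\alpha\cdot\beta$ into exactly $\alpha'\preccurlyeq\alpha$ and $\beta'\preccurlyeq\beta$.

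For \eqref{propo4}, I would split the sum $L_{\alpha\odot\beta}=\sum_{\gamma\preccurlyeq\alpha\odot\beta}M_\gamma$ according to whether $N\in D(\gamma)$. The terms with $N\in D(\gamma)$ are precisely the refinements of $\alpha\cdot\beta$ (adjoining $N$ to $D(\alpha\odot\beta)$ recovers $D(\alpha\cdot\beta)$, while the $E$- and $F$-sets already agree), so by the previous paragraph they sum to $L_\alpha\bullet L_\beta$. It then remains to match the terms with $N\notin D(\gamma)$ against $L_\alpha\odot L_\beta=\sum_{\alpha'\preccurlyeq\alpha,\ \beta'\preccurlyeq\beta}M_{\alpha'\odot\beta'}$. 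Here the argument has real content: since $N\notin D(\gamma)$, slot $N$ lies strictly inside some part of $\gamma$, and because $N\notin E(\alpha\odot\beta)=E(\gamma)$ that part must be non-dotted; hence it splits at $N$ into two strictly positive non-dotted pieces, giving a factorization $\gamma=\alpha'\odot\beta'$ in which $\alpha'$ occupies the first $N$ slots. The transformation formulas then certify $\alpha'\preccurlyeq\alpha$ and $\beta'\preccurlyeq\beta$, and conversely any such pair has non-dotted boundary parts, so $\alpha'\odot\beta'$ is defined, omits $N$ from its descent set, and refines $\alpha\odot\beta$.

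The main obstacle I anticipate is precisely this last bijection. One must be careful that the part of $\gamma$ straddling slot $N$ is non-dotted — this is exactly where the non-dotted hypothesis on the boundary entries and the condition $E(\gamma)=E(\alpha\odot\beta)$ are jointly used — and that the two split pieces are strictly positive, so that the factorization $\gamma=\alpha'\odot\beta'$ exists, is unique, and never produces a forbidden overlap of two dotted entries (which would make the $\odot$ product vanish). Once the non-dotted character of that part is secured, the $D/E/F$ bookkeeping on either side of slot $N$ is routine.
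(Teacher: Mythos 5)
Your proposal is correct and follows essentially the same route as the paper: expand both sides in the monomial basis, use the $D/E/F$ characterization of $\preccurlyeq$ together with how these sets transform under $\cdot$ and $\odot$, and split the refinements of $\alpha\odot\beta$ according to whether the boundary slot $N$ is a descent. If anything, you supply slightly more detail than the paper does for the bijection underlying \eqref{propo3} and for why the part of $\gamma$ straddling slot $N$ must be non-dotted.
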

\begin{proof} We first prove \eqref{propo3}. Using Definition~\ref{def:fundamental} and \eqref{mon1}, we obtain immediately  \eqref{propo3} since
 \begin{equation}
L_{\alpha}\bullet L_{\beta}  =
 \sum_{\mu \preceq \alpha }M_{\mu} \bullet \sum_{\eta \preceq \beta } M_{\eta}  
= \sum_{\mu \preceq \alpha } \sum_{\eta \preceq \beta } M_{\mu \cdot \eta} 
= \sum_{\kappa \preceq  \alpha \cdot \beta}  M_{\kappa} 
= L_{\alpha \cdot \beta}.
 \end{equation}

We now prove \eqref{propo4} which is a little less straightforward. 
From the definition of the fundamental quasisymmetric functions in superspace
together with $\eqref{mon1}$ and $\eqref{mon2}$, we have
\begin{equation}
  L_{\alpha}\bullet L_{\beta}+L_{\alpha}\odot L_{\beta}
  = \sum_{\gamma  \preccurlyeq \alpha   } \sum_{\eta \preccurlyeq \beta } (M_{\gamma} \bullet M_{\eta}+M_{\gamma} \odot M_{\eta} ).
\end{equation}
By assumption, the last entry of $\alpha$ and the first entry of $\beta$ are both non-dotted.  Since $\gamma  \preccurlyeq \alpha$ and $ \eta \preccurlyeq \beta$, this implies that  the last entry of $\gamma$ and the first entry of $\eta$ are also both non-dotted.  We thus deduce that
$M_\gamma \odot M_\eta$ is always equal to $M_{\gamma \odot \eta}$.  Hence \eqref{propo4} holds if we can show that
\begin{equation}
  \sum_{\sigma \preccurlyeq \alpha \odot \beta}M_{\sigma}=\sum_{\gamma  \preccurlyeq \alpha  } \sum_{\eta  \preccurlyeq \beta } (M_{\gamma \cdot \eta} +M_{\gamma \odot \eta}  ).
\end{equation}
Suppose that the dotted composition $\alpha\vdash(n_\alpha,m_\alpha)$
and let $d=n_\alpha+m_\alpha$. 
We show the stronger statement that the first (resp. second) sums on both sides of the equation
\begin{equation} \label{aprouvernew}
  \sum_{\sigma \preccurlyeq \alpha \odot \beta ; d \in D(\sigma)}M_{\sigma} +  \sum_{\sigma \preccurlyeq \alpha \odot \beta;  d \not \in D(\sigma)}M_{\sigma}= \sum_{\gamma  \preccurlyeq \alpha ,  \eta  \preccurlyeq \beta } M_{\gamma \cdot \eta}
  + \sum_{\gamma  \preccurlyeq \alpha ,  \eta  \preccurlyeq \beta } M_{\gamma \odot \eta}
  \end{equation}
are equal. 
Consider the map $\sigma \mapsto (\gamma,\eta)$, where $\gamma$ and $\eta$
are the unique dotted compositions of $(n_\alpha,m_\alpha)$ and $(n_\beta,m_\beta)$ respectively such that
$$
D(\gamma)= D(\sigma) \setminus \{ d,\dots,n+m-1 \},\quad
E(\gamma)= E(\sigma) \setminus \{ d,\dots,n+m-1 \}, \quad
F(\gamma)= F(\sigma) \setminus \{ d,\dots,n+m-1 \}
$$
and
$$
D_d(\eta)= D(\sigma) \setminus \{ 1,\dots,d \},\quad
E_d(\eta)= E(\sigma) \setminus \{ 1,\dots,d \},  \quad
F_d(\eta)= F(\sigma) \setminus \{ 1,\dots,d \}
$$
(note that on the previous line, for a given set $A=\{a_1,\dots,a_r \}$, we let
$A_d=\{a_1+d,\dots,a_r+d \}$). It is then immediate by construction that
\begin{equation} \label{eqDgamma}
D (\gamma)  \cup D_d(\eta)=D(\sigma) \setminus \{ d \}, \quad E (\gamma)  \cup E_d(\eta)=E(\sigma),
 \quad {\rm and} \quad   F(\gamma) \cup F_{d}(\eta)=F(\sigma).
\end{equation}
It is also easy to obtain that
\begin{equation} \label{eqDdot}
D (\alpha \cdot \beta) = D(\alpha) \cup D_{d}(\beta) \cup \{d\}, \quad E (\alpha \cdot \beta) = E(\alpha) \cup E_{d}(\beta), \quad {\rm and} \quad  F (\alpha \cdot \beta) = F(\alpha) \cup F_{d}(\beta).
\end{equation}
If the last entry of $\alpha$ and the first entry of $\beta$ are both non-dotted, we can similarly obtain that
\begin{equation} \label{eqDcdot}
D (\alpha \odot \beta) = D(\alpha) \cup D_{d}(\beta), \quad E (\alpha \odot \beta) = E(\alpha) \cup E_{d}(\beta), \quad {\rm and} \quad  F (\alpha \odot \beta) = F(\alpha) \cup F_{d}(\beta).
\end{equation}
We thus have from \eqref{precequiv} that
\begin{equation} \label{eqDD}
  \sigma  \preccurlyeq \alpha \odot \beta \iff D(\alpha) \cup D_d(\beta) \subseteq D(\sigma), \quad  E(\alpha) \cup E_{d}(\beta)=E(\sigma), \quad {\rm and} \quad   F(\alpha) \cup F_{d}(\beta)=F(\sigma)
\end{equation}
from which we deduce, using \eqref{eqDgamma}, that
$\gamma \preccurlyeq \alpha$ and $\eta \preccurlyeq \beta$ iff  $\sigma \preccurlyeq \alpha \odot \beta$ just as in the non-dotted case.

Finally, let $B=\{ \sigma \, | \, \alpha \odot \beta \preccurlyeq \sigma  , d \in D(\sigma) \}$ and $C=\{ \sigma \, | \, \alpha \odot \beta \preccurlyeq \sigma  , d \not \in D(\sigma) \}$. From our previous observation, we get that the map  $\sigma \mapsto (\gamma , \eta)$ restricted to $B$, which is such that $\gamma \cdot \eta=\sigma$ from \eqref{eqDgamma} and \eqref{eqDdot}, provides a bijection between the sets $B$ and
 $\{ (\gamma, \eta) \, | \, \gamma  \preccurlyeq \alpha, \eta  \preccurlyeq \beta  \}$. Similarly,   the map  $\sigma \mapsto (\gamma , \eta)$ restricted to $C$, which is such that $\gamma \odot \eta=\sigma$ from \eqref{eqDgamma} and \eqref{eqDcdot}, provides a bijection between  the sets $C$ and 
 $\{ (\gamma, \eta) \, | \, \gamma  \preccurlyeq \alpha, \eta  \preccurlyeq \beta  \}$. This proves \eqref{aprouvernew}, and completes the proof.
\end{proof}

We now show that the antipode behaves well when acting on both of our products.

\begin{theorem} \label{propantiM}  Let $F$ and $G$ be quasisymmetric functions in superspace that  are homogeneous in the fermionic variables, and let
$m_F$ and $m_G$ be their respective fermionic degrees. The antipode then satisfies the following two properties:
\begin{equation} \label{pprop2}
S(F \bullet G)= (-1)^{m_F  m_G}\bigl(S(G) \bullet S(F)+S(G) \odot S(F) \bigr)
\end{equation}
and
\begin{equation} \label{pprop1}
  S(F \odot G)= (-1)^{m_F m_G  -1}S(G) \odot S(F).
\end{equation}
\end{theorem}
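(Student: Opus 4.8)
The plan is to reduce both identities to the case of monomials and then to two purely combinatorial statements about the weak order $\trianglerighteq$. Since $\bullet$ and $\odot$ are bilinear, $S$ is linear, and the $M_\alpha$ with $\alpha\vdash(n,m)$ span each fermionic-homogeneous component, it suffices to prove \eqref{pprop2} and \eqref{pprop1} when $F=M_\alpha$ and $G=M_\beta$, so that $m_F=m_\alpha$ and $m_G=m_\beta$. I would then expand every term using the explicit antipode formula of Proposition~\ref{prop:antiM}, together with the elementary identities $\Rev(\alpha\cdot\beta)=\Rev(\beta)\cdot\Rev(\alpha)$ and, when defined, $\Rev(\alpha\odot\beta)=\Rev(\beta)\odot\Rev(\alpha)$, and the bookkeeping $\ell(\alpha\cdot\beta)=\ell(\alpha)+\ell(\beta)$, $\ell(\alpha\odot\beta)=\ell(\alpha)+\ell(\beta)-1$, and $m_{\alpha\cdot\beta}=m_{\alpha\odot\beta}=m_\alpha+m_\beta$.

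With these substitutions each identity separates into a sign factor and a sum of monomials. The sign factors on the two sides match precisely because of the congruence
\[
\binom{m_\alpha+m_\beta}{2}\equiv \binom{m_\alpha}{2}+\binom{m_\beta}{2}+m_\alpha m_\beta \pmod 2 ,
\]
which is exactly the discrepancy absorbed by the prefactor $(-1)^{m_\alpha m_\beta}$ in \eqref{pprop2} and by $(-1)^{m_\alpha m_\beta-1}$ in \eqref{pprop1}, the extra $-1$ accounting for the drop $\ell(\alpha\odot\beta)=\ell(\alpha)+\ell(\beta)-1$. After cancelling the matching signs, what remains to prove are the two set identities
\[
\sum_{\gamma \,\trianglerighteq\, \mu\cdot\nu} M_\gamma = \sum_{\substack{\delta\,\trianglerighteq\,\mu\\ \epsilon\,\trianglerighteq\,\nu}}\bigl(M_{\delta\cdot\epsilon}+M_{\delta\odot\epsilon}\bigr)
\qquad\text{and}\qquad
\sum_{\gamma\,\trianglerighteq\,\mu\odot\nu} M_\gamma = \sum_{\substack{\delta\,\trianglerighteq\,\mu\\ \epsilon\,\trianglerighteq\,\nu}} M_{\delta\odot\epsilon},
\]
where $\mu=\Rev(\beta)$ and $\nu=\Rev(\alpha)$, and $M_{\delta\odot\epsilon}$ is read as $0$ when undefined.

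Both identities I would prove by a junction-splitting bijection modeled on the proof of Proposition~\ref{prop51}, but run in the weak order instead of $\preccurlyeq$. Writing $d=|\mu|+m_\mu$ for the partial sum sitting at the $\mu/\nu$ junction, I sort each $\gamma\trianglerighteq\mu\cdot\nu$ according to whether $d\in D(\gamma)$. If $d\in D(\gamma)$, then no merge crosses the junction and $\gamma$ factors as $\gamma=\delta\cdot\epsilon$ with $\delta\trianglerighteq\mu$ and $\epsilon\trianglerighteq\nu$ obtained by restriction; if $d\notin D(\gamma)$, then a single part of $\gamma$ straddles the junction, and cutting it there produces $\delta\trianglerighteq\mu$, $\epsilon\trianglerighteq\nu$ with $\gamma=\delta\odot\epsilon$. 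This dichotomy yields the first identity; the second is the same construction with $\mu\cdot\nu$ replaced by $\mu\odot\nu$, where the junction is already interior so only the $\odot$-branch survives.

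The main obstacle, and the point requiring real care, is the dottedness bookkeeping in the $\odot$-branch. I would argue that a weak coarsening of $\mu\cdot\nu$ absorbs at most one dotted part into each part of $\gamma$, so the straddling part carries at most one dot; cutting it at $d$ therefore leaves a dot on at most one side, which is exactly the condition for $\delta\odot\epsilon$ to be defined, and conversely every defined $\delta\odot\epsilon$ is a weak coarsening of $\mu\cdot\nu$. This same analysis also settles the degenerate instance of \eqref{pprop1} in which $M_\alpha\odot M_\beta=0$ because the last entry of $\alpha$ and the first entry of $\beta$ are both dotted: then the last entry of every $\delta\trianglerighteq\Rev(\beta)$ and the first entry of every $\epsilon\trianglerighteq\Rev(\alpha)$ inherit those dots, forcing $M_{\delta\odot\epsilon}=0$ throughout, so that $S(M_\beta)\odot S(M_\alpha)=0$ and \eqref{pprop1} reduces to $0=0$. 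Finally, verifying that the restriction and cutting operations land in the weak order in both directions, namely that $\gamma\trianglerighteq\mu\cdot\nu$ holds if and only if $\delta\trianglerighteq\mu$ and $\epsilon\trianglerighteq\nu$, is the weak-order analogue of the equivalence \eqref{precequiv} exploited in Proposition~\ref{prop51}, and is where the remaining routine but delicate $D$-$E$-$F$ checking lives.
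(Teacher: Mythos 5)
Your proposal is correct, but it reaches the two identities by a genuinely different route than the paper. The paper also reduces to the monomial basis, but then argues by induction on $\ell(\alpha)$: it peels off the first part of $\alpha$, so that the only combinatorics needed is the single-part case (the analogue of your first set identity with $\nu=(n)$ or $(\dot n)$, proved by looking at whether the last part $\sigma_r$ of $\sigma$ equals $n$), and the general case follows from associativity of $\bullet$ and $\odot$ at the price of a case-by-case sign check in the inductive step when $\alpha_1$ is dotted. You instead prove the full combinatorial identities
$\sum_{\gamma \trianglerighteq \mu\cdot\nu} M_\gamma = \sum_{\delta\trianglerighteq\mu,\,\epsilon\trianglerighteq\nu}(M_{\delta\cdot\epsilon}+M_{\delta\odot\epsilon})$ and $\sum_{\gamma\trianglerighteq\mu\odot\nu} M_\gamma = \sum_{\delta\trianglerighteq\mu,\,\epsilon\trianglerighteq\nu} M_{\delta\odot\epsilon}$
in one shot by splitting at the junction $d$ according to whether $d\in D(\gamma)$, absorbing all signs at once via $\binom{m_\alpha+m_\beta}{2}=\binom{m_\alpha}{2}+\binom{m_\beta}{2}+m_\alpha m_\beta$ and the length drop under $\odot$. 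Your key observation --- that a weak coarsening never absorbs two dotted parts into a single block, so the straddling part carries at most one dot and $\delta\odot\epsilon$ is automatically defined --- is exactly what makes the $\odot$-branch of the dichotomy close up, and your treatment of the degenerate case $M_\alpha\odot M_\beta=0$ coincides with the paper's. What your approach buys is a transparent structural explanation of why $S(F\bullet G)$ produces the sum $S(G)\bullet S(F)+S(G)\odot S(F)$ (the junction either survives in $D(\gamma)$ or is swallowed by a straddling part); what the paper's induction buys is that the only bijection ever needed is the trivial one that splits off a final part, with the remaining work being sign bookkeeping. One point you should make fully explicit if you write this up: in the second identity, showing that every defined $\delta\odot\epsilon$ satisfies $\delta\odot\epsilon\trianglerighteq\mu\odot\nu$ (and not merely $\trianglerighteq\mu\cdot\nu$) requires characterizing weak coarsenings as consecutive block partitions with at most one dotted constituent per block, so that the merge at the junction can be performed first; this is the ``delicate checking'' you defer, and it is genuinely needed.
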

\begin{proof}
 By linearity, it suffices to show the two relations on any basis of the ring of quasisymmetric functions in superspace which is homogeneous in the fermionic variables. We use the monomial quasisymmetric functions in superspace basis.  We thus need to show that for $\alpha,\beta$ dotted compositions
 of fermionic degree $m_\alpha$ and $m_\beta$ respectively, we have
 \begin{equation} \label{prop2}
S(M_{\alpha} \bullet M_{\beta})= (-1)^{m_{\alpha } m_{\beta}}\bigl(S(M_{\beta}) \bullet S(M_{\alpha})+S(M_{\beta}) \odot S(M_{\alpha}) \bigr)
\end{equation}
and
\begin{equation} \label{prop1}
  S(M_{\alpha}  \odot M_{\beta})= (-1)^{m_{\alpha }m_{\beta } -1}S(M_{\beta}) \odot S(M_{\alpha}).
\end{equation}

We first prove \eqref{prop2}.  We proceed by induction on the length of $\alpha$. Suppose that $\alpha=(n)$ is of length 1 and non-dotted for simplicity (the dotted case is similar).
From Proposition~\ref{prop:antiM}, we have
\begin{equation*}
  S(M_{(n) \cdot \beta}) =(-1)^{l+1+{m_{\beta}\choose 2}} \sum_{\sigma \unrhd (\beta_l,\dots,\beta_1,n)} M_{\sigma}
\end{equation*}  
and
\begin{align} \nonumber
  S(M_{\beta}) \bullet S(M_{(n)})+ S(M_{\beta}) \odot S(M_{(n)})&=  (-1)^{l+{m_{ \beta}\choose 2}} \left( \sum_{\gamma \unrhd ( \beta_l,\dots,\beta_1)} \left[ M_{\gamma} \bullet  \left(-  M_{(n)} \right) +  M_{\gamma}  \odot \left(-  M_{(n)} \right) \right]  \right)\\
&= (-1)^{l+1+{m_{ \beta}\choose 2}} \sum_{\gamma \unrhd ( \beta_l,\dots,\beta_1)}  (M_{\gamma \cdot \eta}+M_{\gamma \odot \eta}) ,\nonumber
\end{align}
where we made use of \eqref{mon1} and \eqref{mon2}.
Proving \eqref{prop2} in the case $\alpha=(n)$ thus amounts to showing that
\begin{equation} \label{eq513}
  \sum_{\sigma \unrhd (\beta_l,\dots, \beta_1,n)} M_{\sigma} =  \sum_{\gamma \unrhd (\beta_l,\dots,\beta_1)}  (M_{\gamma \cdot (n)}+M_{\gamma \odot (n)}).
\end{equation}
Observe that the last entry of $\sigma$, denoted $\sigma_r$, is such that $\sigma_r \geq n$ since
$\sigma \unrhd (\beta_l,\dots, \beta_1,n)$. We can thus rewrite the previous equation as
\begin{equation} \label{eq513b}
  \sum_{\sigma \unrhd (\beta_l,\dots, \beta_1,n); \sigma_r =n} M_{\sigma} +   \sum_{\sigma \unrhd (\beta_l,\dots, \beta_1,n); \sigma_r \neq n} M_{\sigma} =  \sum_{\gamma \unrhd (\beta_l,\dots,\beta_1)}  M_{\gamma \cdot (n)}+  \sum_{\gamma \unrhd (\beta_l,\dots,\beta_1)}  M_{\gamma \odot (n)}.
\end{equation}
Using $\sigma =\gamma \cdot (n)$, where $\gamma=(\sigma_1,\dots,\sigma_{r-1})$,
it is then obvious that the first sums on both sides are equal, while using
$\sigma =\gamma \odot (n)$, where $\gamma=(\sigma_1,\dots,\sigma_{r}-n)$, we get that the second sums on both sides are equal.

We can now prove  \eqref{prop2} in the  case where $\alpha$ is of arbitrary length.  Suppose that $\alpha_1$ is non-dotted.  Letting   $\hat \alpha =(\alpha_2,\dots,\alpha_r)$, we have from induction   that
\begin{align*}
  S(M_\alpha \bullet M_\beta)=S(M_{(\alpha_1)} \bullet M_{\hat \alpha} \bullet M_\beta) 
  & = S( M_{\hat \alpha} \bullet M_\beta) \bullet S(M_{(\alpha_1)}) +  S( M_{\hat \alpha} \bullet M_\beta) \odot S(M_{(\alpha_1)})   \\
  &= (-1)^{m_\alpha m_\beta} \bigl( S(M_\beta) \bullet S(M_{\hat \alpha}) +  S(M_\beta) \odot S(M_{\hat \alpha}) \bigr) \bullet S(M_{(\alpha_1)}) \\
  &  \qquad + (-1)^{m_\alpha m_\beta} \bigl( S(M_\beta) \bullet S(M_{\hat \alpha}) +  S(M_\beta) \odot S(M_{\hat \alpha}) \bigr) \odot S(M_{(\alpha_1)}) \\
&= (-1)^{m_\alpha m_\beta} S(M_\beta) \bullet \bigl( S(M_{\hat \alpha}) \bullet S(M_{(\alpha_1)}) +  S(M_{\hat \alpha}) \odot S(M_{(\alpha_1)}) \bigr)  \\
  &  \qquad +(-1)^{m_\alpha m_\beta} S(M_\beta) \odot \bigl( S(M_{\hat \alpha}) \bullet S(M_{(\alpha_1)}) +  S(M_{\hat \alpha}) \odot S(M_{(\alpha_1)}) \bigr)
 \\
  &=  (-1)^{m_\alpha m_\beta} \bigl(  S(M_\beta) \bullet  
  S( M_{\alpha_1} \bullet M_{\hat \alpha})  +  S(M_\beta) \odot  
  S( M_{\alpha_1} \bullet M_{\hat \alpha})  \bigr) 
  \\
  &=  (-1)^{m_\alpha m_\beta} \bigl(  S(M_\beta) \bullet 
  S( M_{\alpha})  +  S(M_\beta) \odot  
  S( M_{\alpha})  \bigr) ,
\end{align*}
as desired.
In the case where $\alpha_1$ is dotted, the equations are the same except that 
the signs in the last five equalites are respectively $(-1)^{m_\alpha+ m_\beta-1 }$, $(-1)^{m_\alpha +m_\beta -1 +(m_\alpha -1)m_\beta }=(-1)^{m_\alpha m_\beta +m_\alpha -1}$, $(-1)^{m_\alpha m_\beta +m_\alpha -1}$, $(-1)^{m_\alpha m_\beta +m_\alpha -1+(m_\alpha-1)}=(-1)^{m_\alpha m_\beta}$
and $(-1)^{m_\alpha m_\beta }$, which means that the result also holds.

We now prove \eqref{prop1}.  We begin the proof by considering the case where
$M_{\alpha}  \odot M_{\beta}=0$.  By definition of the $\odot$ product, 
we have in this case that the last entry of $\alpha$  and the first entry of $\beta$ are both dotted. Using Proposition~\ref{prop:antiM} and the definition of the order $ \unrhd $, it is easy to see that all the compositions $\gamma$ appearing in $S(M_\beta)$ have their {\it last} entry dotted and that all the compositions $\omega$ appearing in $S(M_\alpha)$ have their {\it first} entry dotted. Hence $S(M_{\beta}) \odot S(M_{\alpha})=0$ and the result holds in that case since
$$
 S(M_{\alpha}  \odot M_{\beta})=S(0)=0 = (-1)^{m_{\alpha }m_{\beta } -1}S(M_{\beta}) \odot S(M_{\alpha}).
$$
 We now consider the case when $M_{\alpha}  \odot M_{\beta}=M_{\alpha \odot \beta}\neq 0$, that is when
 the last entry of $\alpha$  and the first entry of $\beta$ are not  both dotted.  We will again prove the result by induction on the length of $(\alpha)$.  We will consider only the case $\alpha=(n)$ as the case $\alpha=(\dot n)$ is similar.
From Proposition~\ref{prop:antiM}, we have 
\begin{equation} \label{prop1a1}
  S(M_{(n) \odot \beta})=S(M_{(n+\beta_1,\dots,\beta_l)}) =(-1)^{l+{m_{ \beta}\choose 2}} \sum_{\sigma \unrhd (\beta_l,\dots,\beta_1+n)} M_{\sigma}.
\end{equation}  
On the other hand, using again Proposition~\ref{prop:antiM}, we obtain from    
\eqref{mon2} that
\begin{align} \nonumber
 S(M_{\beta}) \odot S(M_{(n)}) &=   \left( (-1)^{l+{m_{ \beta}\choose 2}} \sum_{\omega \unrhd (\beta_l,\dots,\beta_1)} M_{\omega}\right) \odot \left( -M_{(n)} \right)\\ \label{prop1a2} 
  &= (-1)^{ l+{m_{\beta }\choose 2}-1} \sum_{\omega \unrhd ( \beta_l,\dots,\beta_1)}  M_{\omega \odot (n)}.
\end{align}
Comparing 
\eqref{prop1a1} and \eqref{prop1a2}, we thus get that \eqref{prop1} will hold
in the case $\alpha=(n)$
if we can show that
\begin{equation}
  \sum_{\sigma \unrhd (\beta_l,\dots,\beta_1+n)} M_{\sigma} =  \sum_{\omega \unrhd (\beta_l\dots,\beta_1)}  M_{\omega \odot (n)}.
\end{equation}
But the equality is easily seen to hold using $\sigma=\omega \odot (n)$, where $\omega=(\sigma_1,\dots,\sigma_r-n)$.

Suppose finally that the length of $\alpha$ is arbitrary in \eqref{prop1} and that $\alpha_1$ is non-dotted. 
Letting again   $\hat \alpha =(\alpha_2,\dots,\alpha_r)$, we have from induction and  \eqref{prop2} that
\begin{align*}
  S(M_\alpha \odot M_\beta)=S(M_{(\alpha_1)} \bullet M_{\hat \alpha} \odot M_\beta) 
  & = S( M_{\hat \alpha} \odot M_\beta) \bullet S(M_{(\alpha_1)}) +  S( M_{\hat \alpha} \odot M_\beta) \odot S(M_{(\alpha_1)})   \\
  &= (-1)^{m_\alpha m_\beta-1} S(M_\beta) \odot \Bigl( 
  S( M_{\hat \alpha}) \bullet S(M_{(\alpha_1)}) +   S( M_{\hat \alpha})  \odot S(M_{(\alpha_1)})  \Bigr ) \\
  &=  (-1)^{m_\alpha m_\beta-1} S(M_\beta) \odot  
  S( M_{\alpha}) ,
\end{align*}
as desired. In the case where $\alpha_1$ is dotted, the signs in the last three equalites are respectively $(-1)^{m_\alpha +m_\beta -1}$, $(-1)^{m_\alpha +m_\beta -1 +(m_\alpha -1)m_\beta -1}=(-1)^{m_\alpha m_\beta +m_\alpha -1}$ and $(-1)^{m_\alpha m_\beta -1}$, which implies that the result is also seen to hold.

\end{proof}  
We obtain as a corollary the main property that will allow us to compute the antipode of any fundamental quasisymmetric function in superspace recursively.
\begin{corollary} \label{coroS}
  For $\alpha$ and $\beta$ dotted compositions such that
  the last entry of $\alpha$ and the first entry of $\beta$ are both non-dotted, we have
\begin{equation} \label{eqdecompL}
  S(L_{\alpha \odot \beta})=(-1)^{m_{\alpha } m_{\beta}} S(L_{\beta}) \bullet S(L_{\alpha}),
\end{equation}
where $m_\alpha$ and $m_\beta$ are the fermionic degrees of $\alpha$ and $\beta$ respectively.  
\end{corollary}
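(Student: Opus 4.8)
The plan is to derive this identity as a direct consequence of the product relation \eqref{propo4} and the two compatibility formulas for the antipode established in Theorem~\ref{propantiM}; no new combinatorial work should be required. First I would observe that the hypothesis---the last entry of $\alpha$ and the first entry of $\beta$ are both non-dotted---is exactly what is needed for \eqref{propo4} of Proposition~\ref{prop51} to apply, giving
\begin{equation*}
L_{\alpha \odot \beta}= L_{\alpha}\bullet L_{\beta}+L_{\alpha}\odot L_{\beta}.
\end{equation*}

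Next I would apply the antipode $S$ to both sides. Since $S$ is linear, this yields $S(L_{\alpha \odot \beta})=S(L_{\alpha}\bullet L_{\beta})+S(L_{\alpha}\odot L_{\beta})$. Before invoking Theorem~\ref{propantiM} I should check its hypotheses: both $L_\alpha$ and $L_\beta$ are homogeneous in the fermionic variables, of fermionic degrees $m_\alpha$ and $m_\beta$ respectively, because $L_\alpha=\sum_{\gamma \preccurlyeq \alpha} M_\gamma$ involves only monomial functions $M_\gamma$ of the same fermionic degree as $\alpha$. Applying \eqref{pprop2} and \eqref{pprop1} with $F=L_\alpha$ and $G=L_\beta$ then gives
\begin{align*}
S(L_{\alpha}\bullet L_{\beta})&= (-1)^{m_\alpha m_\beta}\bigl(S(L_{\beta}) \bullet S(L_{\alpha})+S(L_{\beta}) \odot S(L_{\alpha}) \bigr),\\
S(L_{\alpha}\odot L_{\beta})&= (-1)^{m_\alpha m_\beta -1}S(L_{\beta}) \odot S(L_{\alpha})=-(-1)^{m_\alpha m_\beta}S(L_{\beta}) \odot S(L_{\alpha}).
\end{align*}

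Finally I would add these two expressions. The key point is that the factor $(-1)^{m_\alpha m_\beta-1}=-(-1)^{m_\alpha m_\beta}$ appearing in the second line makes the two $S(L_{\beta})\odot S(L_{\alpha})$ contributions cancel exactly, leaving
\begin{equation*}
S(L_{\alpha \odot \beta})=(-1)^{m_\alpha m_\beta}S(L_{\beta}) \bullet S(L_{\alpha}),
\end{equation*}
which is precisely \eqref{eqdecompL}. I do not anticipate any genuine obstacle: all the real difficulty has already been absorbed into Theorem~\ref{propantiM}, and this argument is merely the bookkeeping that combines that theorem with \eqref{propo4}. The one subtlety worth flagging is the fermionic-homogeneity hypothesis of the theorem, which, as noted above, is automatic for fundamentals indexed by dotted compositions.
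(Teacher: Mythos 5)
Your argument is correct and is essentially identical to the paper's own proof: both apply \eqref{propo4}, then the antipode, then the two compatibility formulas of Theorem~\ref{propantiM}, and observe the cancellation of the $S(L_{\beta})\odot S(L_{\alpha})$ terms. Your explicit check of the fermionic-homogeneity hypothesis is a small, harmless addition the paper leaves implicit.
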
 
\begin{proof}
  From Proposition~\ref{prop51}, we have when  the last entry of $\alpha$ and the first entry of $\beta$ are both non-dotted that
  $$
L_{\alpha \odot \beta} = L_\alpha \bullet L_\beta +  L_\alpha \odot L_\beta .
  $$
Using Theorem~\ref{propantiM}, we then conclude that
$$
(-1)^{m_{\alpha } m_{\beta}}  S(L_{\alpha \odot \beta} )  = \bigl(S(L_{\beta}) \bullet S(L_{\alpha}) + S(L_{\beta}) \odot S(L_{\alpha} ) \bigr) - S(L_{\beta}) \odot S(L_{\alpha}) 
  = S(L_{\beta}) \bullet S(L_{\alpha}),
  $$
which is equivalent to \eqref{eqdecompL}.  
\end{proof}
It is now simple to compute the antipode of a fundamental quasisymmetric function in superspace recursively.  We say that a dotted composition is a
 {\defstyle column}
  if all of its non-dotted entries are equal to 1.  It is easy to see
that any dotted composition $\gamma$ can be decomposed uniquely as
\begin{equation} \label{columndecomp}
\gamma = \alpha^{(1)} \odot \alpha^{(2)} \cdots \odot \alpha^{(\ell)} ,
\end{equation}
where all the dotted compositions $\alpha^{(1)},\dots, \alpha^{(\ell)}$ are columns whose first and last entries are equal to 1 (except possibly the first entry of $\alpha^{(1)}$ and the last entry of $\alpha^{(\ell)}$). For instance, we have
$$
(2,\dot 0, 3, 1,\dot 3,1,\dot 2,2,1,3,\dot 0)=(1) \odot (1,\dot 0,1) \odot (1) \odot (1,1,\dot 3 ,1,\dot 2,1) \odot (1,1)\odot (1) \odot (1,\dot 0)   .
$$
Using the decomposition  \eqref{columndecomp}, we
get from Corollary~\ref{coroS} that the antipode of a fundamental quasisymmetric function in superspace decomposes as
\begin{equation} \label{eqdecomp}
S(L_\gamma)= (-1)^{ {\rm sign(\gamma)} } S (L_{\alpha^{(\ell)}}) \bullet S (L_{\alpha^{(\ell-1)}}) \bullet \cdots \bullet S (L_{\alpha^{(1)}}),
\end{equation}
where ${\rm sign} (\gamma) =\sum_{i<j} m_{\alpha^{(i)}} m_{\alpha^{(j)}}$.
It thus only remains to obtain $S(L_\alpha)$ when the dotted composition $\alpha$ is a column. For that purpose, let a dotted composition be  {\defstyle maximal}
 if it does not have any consecutive non-dotted entries.  For example, the composition 
$
(1,\dot 2, 4, \dot 5,\dot 4, 7, \dot 0, 2, \dot 3)
$
is maximal  while 
$
(1,\dot 2, 4, 2,\dot 5,\dot 4, 7, \dot 0, 2, \dot 3)$ and $(2,1,\dot 2,4,\dot 5,\dot 4, 7, \dot 0, 2, \dot 3)$ are not.  It is not hard to see that
any composition $\alpha$ is such that $\alpha  \preccurlyeq \beta$ for a unique maximal dotted composition $\beta$ (obtained by summing all the consecutive non-dotted entries of $\alpha$).

The following proposition gives the desired action of $S$ on $L_\alpha$ when $\alpha$ is a column. 
\begin{proposition} \label{propcolumn}  Let  the dotted composition $\alpha$ be a column. We then have
  $$S(L_{\alpha})=(-1)^{\ell(\alpha)+{m_{\alpha }\choose 2}} \sum_{\beta} L_{\beta}, $$
  where the sum is over all maximal dotted compositions $\beta$ such that $\beta \unrhd \Rev(\alpha)$.
\end{proposition}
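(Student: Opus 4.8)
The plan is to reduce the statement to the monomial antipode via Proposition~\ref{prop:antiM} and then to regroup the resulting monomial quasisymmetric functions into fundamentals. First I would observe that for a column $\alpha$ we simply have $L_\alpha=M_\alpha$. Indeed, $L_\alpha=\sum_{\delta\preccurlyeq\alpha}M_\delta$, and $\delta\preccurlyeq\alpha$ means that $\alpha$ is obtained from $\delta$ by merging adjacent non-dotted parts, i.e.\ that $\delta$ is obtained from $\alpha$ by splitting non-dotted parts into smaller non-dotted parts. Since every non-dotted part of a column equals $1$ and cannot be split further, the only term is $\delta=\alpha$. Applying Proposition~\ref{prop:antiM} and noting that $\Rev(\alpha)$ is again a column, we get
\[
S(L_\alpha)=S(M_\alpha)=(-1)^{\ell(\alpha)+\binom{m_\alpha}{2}}\sum_{\gamma\,\trianglerighteq\,\Rev(\alpha)}M_\gamma .
\]
Thus the proposition is equivalent to the identity $\sum_{\gamma\,\trianglerighteq\,\rho}M_\gamma=\sum_{\beta\text{ maximal},\,\beta\,\trianglerighteq\,\rho}L_\beta$, where $\rho:=\Rev(\alpha)$ is a column.

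To establish this identity I would expand each $L_\beta=\sum_{\gamma\preccurlyeq\beta}M_\gamma$ on the right and collect the coefficient of each $M_\gamma$. By the fact recalled just before the proposition, every dotted composition $\gamma$ strongly refines a \emph{unique} maximal dotted composition $\mathrm{mx}(\gamma)$, obtained by summing each run of consecutive non-dotted entries of $\gamma$. Hence $M_\gamma$ occurs in exactly one summand on the right, namely the one with $\beta=\mathrm{mx}(\gamma)$, and it occurs there precisely when $\mathrm{mx}(\gamma)\trianglerighteq\rho$. So the right-hand side equals $\sum_{\gamma:\,\mathrm{mx}(\gamma)\trianglerighteq\rho}M_\gamma$, and matching it with the left-hand side reduces everything to the key equivalence
\[
\gamma\,\trianglerighteq\,\rho \iff \mathrm{mx}(\gamma)\,\trianglerighteq\,\rho \qquad(\rho\text{ a column}).
\]

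The main obstacle is this equivalence, of which only the backward direction is nontrivial. The forward direction is immediate: $\gamma\preccurlyeq\mathrm{mx}(\gamma)$ gives $\gamma\trianglelefteq\mathrm{mx}(\gamma)$, so $\gamma\trianglerighteq\rho$ yields $\mathrm{mx}(\gamma)\trianglerighteq\gamma\trianglerighteq\rho$ by transitivity. For the converse I would use the combinatorial description of the weak order: $\rho\trianglelefteq\delta$ holds exactly when $\delta$ arises by partitioning the parts of $\rho$ into consecutive blocks, each block containing at most one dotted part, and summing each block (a block containing a dotted part producing a dotted part). Assuming $\rho\trianglelefteq\mathrm{mx}(\gamma)$, I would fix the blocking of $\rho$ that produces $\mathrm{mx}(\gamma)$; since $\mathrm{mx}(\gamma)$ has no two consecutive non-dotted parts, each non-dotted part of $\mathrm{mx}(\gamma)$ comes from a block of consecutive non-dotted $1$'s of $\rho$, while each dotted part comes from a block containing exactly one dotted part of $\rho$. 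Because $\gamma$ refines $\mathrm{mx}(\gamma)$ only by splitting its non-dotted parts (the dotted parts of $\gamma$ and $\mathrm{mx}(\gamma)$ coincide), I would refine each non-dotted block of $1$'s into consecutive sub-blocks of the prescribed sizes while leaving the dotted blocks untouched. This is a legal blocking of $\rho$ producing $\gamma$, so $\rho\trianglelefteq\gamma$, i.e.\ $\gamma\trianglerighteq\rho$.

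Combining the regrouping with the displayed formula for $S(M_\alpha)$ then gives $S(L_\alpha)=(-1)^{\ell(\alpha)+\binom{m_\alpha}{2}}\sum_{\beta}L_\beta$ with the sum over maximal $\beta\trianglerighteq\Rev(\alpha)$, as claimed. The one routine point I would verify in passing is the block characterization of $\trianglelefteq$ used above: a consecutive block of parts can be merged to a single part by legal covering moves if and only if it contains at most one dotted part, since merging two dotted-containing sub-blocks is forbidden, whereas a block with at most one dotted part can always be collapsed by absorbing its non-dotted parts one at a time.
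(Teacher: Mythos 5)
Your proposal is correct and follows essentially the same route as the paper: observe $L_\alpha=M_\alpha$ for a column, apply Proposition~\ref{prop:antiM}, and regroup the monomials into fundamentals via the unique maximal strong coarsening, with the whole argument resting on the equivalence $\gamma\unrhd\Rev(\alpha)\iff \mathrm{mx}(\gamma)\unrhd\Rev(\alpha)$ for columns. The only difference is that you spell out the block-decomposition argument for the nontrivial direction of that equivalence, which the paper dispatches in one sentence.
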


\begin{proof} We first observe that if $\alpha$ is a column then
  \begin{equation}
    \beta \unrhd \Rev(\alpha) \quad {\rm and} \quad \delta  \preccurlyeq \beta \quad \iff \quad \delta \unrhd \Rev(\alpha) ,
  \end{equation}
  where $\beta$ is the unique maximal dotted composition such that $\delta  \preccurlyeq \beta$.  The reverse implication is trivial while the direct one holds because
$\delta$ is obtained from $\beta$ by splitting its non-dotted entries and that all the non-dotted entries of $\alpha$ are 1's by definition.

  Now, from the definition of $L_\alpha$, it is immediate that
$L_\alpha=M_\alpha$ if $\alpha$ is a column.   Hence, we have from Proposition~\ref{prop:antiM} and our previous observation that
  $$S(L_{\alpha}) =S ( M_{\alpha})=(-1)^{\ell(\alpha)+{m_{\alpha }\choose 2}} \sum_{\delta \unrhd \Rev(\alpha)} M_{\delta}= (-1)^{\ell(\alpha)+{m_{\alpha }\choose 2}} \sum_{\beta} \sum_{\delta  \preccurlyeq \beta }M_{\delta} = (-1)^{\ell(\alpha)+{m_{\alpha }\choose 2}} \sum_{\beta} L_\beta, $$
  where, in the double sum, the first  sum is over all maximal dotted compositions $\beta$ such that $\beta \unrhd \Rev(\alpha)$.
     \end{proof}    
For instance, considering the column $\alpha=(1^2, \dot{5}, 1^3)$, we have 
\begin{align*} 
S\left(L_{(1^2, \dot{5}, 1^3)}\right)&=(-1)^{6+{1 \choose 2}}\bigl(L_{(3, \dot{5}, 2)}+ L_{(3, \dot{6}, 1)}+ L_{(3, \dot{7} )} + L_{(2, \dot{6}, 2)} + L_{(2, \dot{7}, 1)}+ L_{(2, \dot{8})}+ L_{(1, \dot{7}, 2)}+ L_{(1, \dot{8}, 1)} \\
&+L_{(1, \dot{9})} + L_{(\dot{8}, 2)}  +  L_{(\dot{9}, 1)} + L_{( \dot{10})} \bigr) ,
\end{align*} 
where we see that the indices are
all the maximal dotted compositions $\beta$ such that $\beta \unrhd \Rev(\alpha)=(1^3,\dot 5, 1^2)$.

\begin{remark} In the non-dotted case, columns are simply compositions of the form $(1^\ell)$ for $\ell=1,2,3,\dots$.  Since $S(L_{(1^\ell)})=(-1)^\ell L_{(\ell)}$ has only one term, 
we recover from our decomposition the usual formula for the antipode of a fundamental quasisymmetric function.  We will illustrate this with an example.  Let $\alpha=(3,1,2,1,2)$.  The column decomposition \eqref{columndecomp}
  is then
  $$
(3,1,2,1,2) = (1) \odot (1) \odot (1,1,1) \odot (1,1,1) \odot (1).
  $$
  This gives
  $$
S(L_{(3,1,2,1,2)}) = S(L_{(1)})  \bullet S(L_{(1,1,1)}) \bullet  S(L_{(1,1,1)}) \bullet
 S(L_{(1)}) \bullet  S(L_{(1)}) . $$
 Using $S(L_{(1)})= -L_{(1)}$ and  $S(L_{(1,1,1)})= -L_{(3)}$, we then get from \eqref{propo3} that
  $$
S(L_{(3,1,2,1,2)}) = (-1)^5 L_{(1)}  \bullet L_{(3)} \bullet  L_{(3)} \bullet
 L_{(1)} \bullet  L_{(1)} = - L_{(1,3,3,1,1)}, $$
 which coincides with the known formula for the action of the antipode on
 $L_{(3,1,2,1,2)}$.
\end{remark}  

\section{Schur function expansion}
\label{sec:schurFnsInTermsOfFunds}

\subsection{Schur functions $ s_{\Lambda/\Omega}$ in terms of fundamentals}
The expansion of Schur functions  in terms of fundamental quasisymmetric functions can be given as a sum over standard tableaux.  We show that this result extends to superspace.

We  first recall some definitions
related to partitions \cite{Mac}.
A partition $\lambda=(\lambda_1,\lambda_2,\dots)$ of degree $|\lambda|$
is a vector of non-negative integers such that
$\lambda_i \geq \lambda_{i+1}$ for $i=1,2,\dots$ and such that
$\sum_i \lambda_i=|\lambda|$.  
Each partition $\lambda$ has an associated Ferrers diagram
with $\lambda_i$ lattice squares in the $i^{th}$ row,
from the top to bottom. Any lattice square in the Ferrers diagram
is called a cell (or simply a square), where the cell $(i,j)$ is in the $i$th row and $j$th
column of the diagram.  
We say that the diagram $\mu$ is contained in $\lambda$, denoted
$\mu\subseteq \lambda$, if $\mu_i\leq \lambda_i$ for all $i$.  Finally,
$\lambda/\mu$ is a horizontal (resp. vertical) $n$-strip if $\mu \subseteq \lambda$, $|\lambda|-|\mu|=n$,
and the skew diagram $\lambda/\mu$ does not have two cells in the same column
(resp. row).

 A superpartition $\Lambda$ \cite{JL} is 
 a pair of partitions $(\Lambda^a; \Lambda^s)$,
  where $\Lambda^a$ is a partition with $m$ 
distinct parts (one of them possibly  equal to zero),
and $\Lambda^s$ is an ordinary partition.  A diagrammatic representation of $\Lambda$ is obtained by reordering the entries of the concatenation of $\Lambda^a$ and $\Lambda^s$ and then adding a circle at the end of every row corresponding to an entry of  $\Lambda^a$. For instance, the diagram corresponding to $\Lambda=(3,0;5,3,2)$ is
$$
{\small {\tableau[scY]{&&&& \\&  &  & \bl\tcercle{} \\&  & \\ & \\ \bl\tcercle{}}}} .
$$

The {\defstyle Schur functions in superspace} $s_{\Omega/\Lambda}$ are presented as a generating sum of certain tableaux, called $s$-tableaux, that we now define \cite{JL}.
We say that $\Omega/\Lambda$ is a {\defstyle bosonic horizontal $\ell$-strip}
of type $s$
if
\begin{enumerate}
\item $\Omega^*/\Lambda^*$ is a horizontal $\ell$-strip. 
\item  The $i$-th circle, starting from below, of $\Omega$ is either 
\begin{itemize}
\item in the same
row  as the $i$-th circle of $\Lambda$ if $\Omega^*/\Lambda^*$ does not contain a cell in that row, or
\item 
one row below that of the $i$-th circle of $\Lambda$ if $\Omega^*/\Lambda^*$ contains a cell in the row  of the
$i$-th circle of $\Lambda$ (we say in this case that the circle was moved). 
\end{itemize}
\end{enumerate}
Similarly, we say that $\Omega/\Lambda$ is a {\defstyle fermionic horizontal
$\ell$-strip} of type $s$
if
\begin{enumerate}
\item 
$\Omega^*/\Lambda^*$ is a horizontal $\ell$-strip. 
\item There exists a unique circle of $\Omega$ (the new circle), let's say in column $c$, such that
\begin{itemize}
\item column $c$  does not contain any cell
of $\Omega^*/\Lambda^*$, and 
\item there is a cell of $\Omega^*/\Lambda^*$ in 
every column strictly to the left of column $c$. 
\end{itemize}
\item If $\tilde \Omega$ is $\Omega$ without its new circle,
then the $i$-th circle, starting from below, of $\tilde \Omega$ is either 
\begin{itemize}
\item in the same
row as the $i$-th circle of $\Lambda$ if $\Omega^*/\Lambda^*$ does not contain a cell in that row,  or 
\item one row below that of the $i$-th circle of $\Lambda$ if $\Omega^*/\Lambda^*$ contains a cell in the row of the
$i$-th circle of $\Lambda$ (we say in this case that the circle was moved). 
\end{itemize}
\end{enumerate}
The sequence $\Omega=\Lambda_{(0)},\Lambda_{(1)},\dots, \Lambda_{(n)}=\Lambda$ is an $s$-tableau 
of shape $\Lambda/\Omega$ and weight $(\alpha_1,\dots,\alpha_n)$, where 
$\alpha_i\in \mathbb \{0,\dot 0, 1,\dot 1,2,\dot 2,\dots \}$, if $\Lambda_{(i)}/\Lambda_{(i-1)}$  is a horizontal $\alpha_i$-strip of type $s$ (fermionic or bosonic depending on whether $\alpha_i$ is dotted or not).
An $s$-tableau can be represented by a diagram constructed recursively in the following way:
\begin{enumerate}
\item  the cells of $\Lambda_{(i)}^*/\Lambda_{(i-1)}^*$, which form a horizontal strip,  are filled with the letter $i$.  In 
the fermionic case, the new circle is also filled with a letter $i$. 
\item the circles of $\Lambda_{(i-1)}$ that are moved a row below keep their fillings.
\end{enumerate}
For instance, the $s$-tableau 
$$
{\small {\tableau[scY]{1&1&2&3&5\\3&3&4\\5&\bl\tcercle{5}\\ \bl\tcercle{3}}}}
$$
corresponds to the sequence
$$
{\small {\tableau[scY]{&}}}
\qquad
{\small {\tableau[scY]{&&}}}
\qquad
{\small {\tableau[scY]{&&& \\&  &\bl\tcercle{}}}}
\qquad
{\small {\tableau[scY]{&&& \\&  & \\ \bl\tcercle{}}}}
\qquad
{\small {\tableau[scY]{&&&& \\&  & \\ & \bl\tcercle{}\\ \bl\tcercle{}}}}.
$$

The sign of an $s$-tableau $T$ is defined in the following way.
Read the fillings of the circles from top to bottom
to obtain a word (without repetition).
The sign of the tableau $T$ is  then equal to $(-1)^{{\rm inv} (T)}$, 
where ${{\rm inv} (T)}$ is the number of inversions of the word.
In the previous example, the sign of the tableau is $(-1)^1$ since 
the word $53$ has one inversion.

 Now, define the skew Schur function in superspace $s_{\Lambda/\Omega}$ as
\begin{equation}
  s_{\Lambda/\Omega} = \sum_{T} (-1)^{{\rm inv}(T)} 
   \te_{1}^{\eta_1} \te_{2}^{\eta_2} \cdots \te_{l}^{\eta_l} x_{1}^{\ga_1}\cdots
 x_{l}^{\ga_l},
\end{equation}
where the sum is over all $s$-tableaux of shape $\Lambda/\Omega$, and where
$T$ is of weight $(\alpha_1,\dots,\alpha_l)$.

\begin{definition}
We say that an $s$-tableau $T$ is {\defstyle dot-standard} if its weight
$(\alpha_1,\dots,\alpha_n)$ is such that $\alpha_i=1$ whenever $\alpha_i$ is non-dotted.
The non-dotted letter $i$ in $T$ is a descent of $T$ if either $\alpha_{i+1}$ is dotted or the letter $i+1$ is below $i$ in $T$.
Suppose $d_1<d_2<\cdots <d_k$ are the descents of $T$ and that there are $N$ non-dotted entries in $\alpha$.  Then let
$d'_i=d_i-r_i$, where $r_i$ is the number of dotted letters smaller than $d_i$
in $T$.
We define the composition $\comp(T)$ in the following
way. The non-dotted parts of $\comp(T)$ are
$(d_1',d_2'-d_1',\dots,d_{k}'-d_{k-1}',N-d_k')$.
If the dotted part
$\alpha_i$ follows the non-dotted part $\alpha_{i-1}$ (associated to
descent $j$) in $\alpha$,
then insert $\alpha_i$ after
$d_j'-d_{j-1}'$ in $\comp(T)$. If a dotted part follows another dotted
part in $\alpha$, then it should also follow it in $\comp(T)$.
\end{definition}

\begin{example}
Here is an example of a dot standard $s$-tableau $T$:
$$
T={\small {\tableau[scY]{1&2&4&6\\3&3&7&8\\5&\bl\tcercle{3}\\9 \\
    \bl\tcercle{8}}}} .
$$
The weight of $T$ is $(1,1,\dot 2,1,1,1,1,\dot 1,1)$, its descents are
$d_1=2,d_2=4,d_3=6,d_4=7$ with $r_1=0$, $r_2=1$, $r_3=1$ and $r_4=1$.  Hence,
$(d_1',d_2'-d_1',d_3'-d_2',d_4'-d_3',N-d_4')=(2,1,2,1,1)$ which leads to
${\rm comp}(T)=(2,\dot 2, 1,2,1,\dot 1,1)$.
\end{example}

The following proposition  gives the expansion of the skew Schur functions in superspace $s_{\Lambda/\Omega}$ in terms of fundamental quasisymmetric functions in superspace $L_\alpha$.  Before proving the proposition we define
the {\defstyle standardisation} ${\rm std}(T)$ of an $s$-tableau $T$.  
The dot-standard $s$-tableau ${\rm std}(T)$ is obtained by doing repeatedly the following operation on all the non-dotted letter in $T$: if $i$ is a non-dotted letter of $T$ with $r$ occurences,
change the $i$'s from left to right into $i$, $i+1$, $i+2$,..., $i+r-1$ and then
change every letter $j>i$ in $T$ to $j+r-1$.  Once this operation is done, if the resulting $s$-tableau $T'$ has consecutive letters, then define ${\rm std}(T)=T'$. Otherwise, relabel the letters of $T'$ so that they are consecutive starting from the letter 1 and let ${\rm std}(T)$ be the resulting $s$-tableau.
It is not difficult to see that ${\rm std}(T)$ is a dot-standard $s$-tableau of the same shape as $T$.  This is because if a circle is moved to the next row by the letter $i$ in $T$, then it will
also be moved to the next row when the $i$'s are replaced by $i$, $i+1$,..., $i+r-1$ (after the letter $i+\ell$ moves the circle to the next row, it will not be moved again later by an $i+s$ with $s>\ell$ since those letters are to the right of $i+\ell$).
\begin{proposition} \label{propsymfun} We have
\begin{equation}
s_{\Lambda/\Omega}= \sum_{\mathcal T}  (-1)^{{\rm inv}(\mathcal T)} L_{\comp(\mathcal T)} ,
\end{equation}
where the sum is over all dot-standard $s$-tableaux $\mathcal T$ of shape ${\Lambda/\Omega}$.
\end{proposition}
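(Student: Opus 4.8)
The plan is to adapt the classical proof that expresses a skew Schur function as the generating function of standard tableaux weighted by fundamental quasisymmetric functions, grouping the tableaux according to their standardisation.

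First I would start from the defining expansion of $s_{\Lambda/\Omega}$ as a signed sum over all $s$-tableaux $T$, where $T$ of weight $(\alpha_1,\dots,\alpha_l)$ contributes $(-1)^{{\rm inv}(T)}\,\theta_1^{\eta_1}\cdots\theta_l^{\eta_l}x_1^{\alpha_1}\cdots x_l^{\alpha_l}$. Since $T\mapsto{\rm std}(T)$ sends every $s$-tableau of shape $\Lambda/\Omega$ to a dot-standard $s$-tableau of the same shape, it partitions the $s$-tableaux into fibres, so that
\[
s_{\Lambda/\Omega}=\sum_{\mathcal T}\ \sum_{T:\,{\rm std}(T)=\mathcal T}(-1)^{{\rm inv}(T)}\,\theta_1^{\eta_1}\cdots x_l^{\alpha_l}.
\]
It then suffices to prove, for each fixed dot-standard $\mathcal T$, that the sign is constant on the fibre (equal to $(-1)^{{\rm inv}(\mathcal T)}$) and that the inner sum of monomials equals $L_{\comp(\mathcal T)}$.

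The sign statement should be short. Standardisation only spreads each repeated non-dotted letter and shifts the larger letters, which is an order-preserving relabelling of all the letters; in particular it preserves the relative order of the circle fillings, so the word read off the circles from top to bottom has the same number of inversions in $T$ and in ${\rm std}(T)$. Applying the same remark in reverse gives ${\rm inv}(T)={\rm inv}(\mathcal T)$ for every $T$ in the fibre.

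The heart of the proof, and the step I expect to be the main obstacle, is identifying the monomial sum over the fibre with $L_{\comp(\mathcal T)}$. Writing $\gamma=\comp(\mathcal T)\vdash(n,m)$, I would use the characterisation of $L_\gamma$ proved earlier, as a sum over weakly increasing sequences $i_1\le\cdots\le i_{n+m}$ with $i_k<i_{k+1}$ for $k\in D(\gamma)$ and $i_k=i_{k+1}$ for $k\in E(\gamma)$, carrying a factor $\theta_{i_j}/x_{i_j}$ for $j\in F(\gamma)$. The plan is to exhibit a bijection between such sequences and the $s$-tableaux $T$ in the fibre: read the $n$ cells and $m$ circles of $\mathcal T$ in the order in which standardisation assigns the labels $1,\dots,n+m$, so that these $n+m$ positions match the indices $k$, and let $i_k$ be the value that de-standardisation places in the $k$-th cell or circle. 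One then checks that the constraints correspond exactly. A position $k$ lies in $D(\gamma)$ exactly at the boundaries between the parts of $\gamma$, which are precisely the places where two successive cells or circles are forced to receive strictly increasing values: these occur at the descents of $\mathcal T$ (a non-dotted letter $i$ with $i+1$ lying strictly below it, or immediately followed by a dotted letter) and at the right end of each fermionic strip, a dotted letter being unable to merge with a distinct following letter or with a cell of different type. A position lies in $E(\gamma)$ precisely when it is interior to a fermionic strip, where all the cells and the circle of a single dotted letter must receive one common value, forcing equality. A position lies in $F(\gamma)$ precisely when it is a circle, the fermionic marker carrying the $\theta$. It then has to be verified that a sequence satisfying these constraints yields a legitimate $s$-tableau — a weakly increasing, descent-strict filling produces a bosonic horizontal strip along each maximal non-dotted run and a well-formed fermionic strip for each dotted letter — and conversely that every $T$ with ${\rm std}(T)=\mathcal T$ arises exactly once this way. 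Matching the monomial $\prod_{j\in F(\gamma)}(\theta_{i_j}/x_{i_j})\,x_{i_1}\cdots x_{i_{n+m}}$ of the sequence with $\theta_1^{\eta_1}\cdots x_l^{\alpha_l}$ of $T$ finishes the fibre computation, and summing over $\mathcal T$ yields the proposition. The subtle points I anticipate are checking that the descent set of $\mathcal T$ together with the circle positions reassembles precisely into $D(\gamma)$, $E(\gamma)$, $F(\gamma)$, and confirming that de-standardisation never sends two distinct circles to the same index — guaranteed because each circle position in $F(\gamma)$ below the largest one lies in $D(\gamma)$, forcing the corresponding $\theta$-indices to be strictly increasing.
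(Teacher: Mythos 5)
Your proposal is correct and follows essentially the same route as the paper: fibre the $s$-tableaux over their standardisation, note that the circle word (hence the sign) is preserved, and identify each fibre sum with $L_{\comp(\mathcal T)}$. The only cosmetic difference is that you match the fibre against the $D/E/F$-sequence characterisation of $L_\gamma$, while the paper matches tableau weights against the refinements $\beta\preccurlyeq\comp(\mathcal T)$ in the expansion $L_{\comp(\mathcal T)}=\sum_{\beta\preccurlyeq\comp(\mathcal T)}M_\beta$; these are equivalent bookkeepings, and your correspondence of descents, fermionic-strip interiors, and circles with $D$, $E$, $F$ is accurate.
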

\begin{proof}
  Given an $s$-tableau $T$, let $\beta$ be the weight of $T$ and let
  $\mathcal T={\rm std}(T)$.  Then, let $\gamma$ be the vector obtained by removing the  0 entries in $\beta$.  It is immediate that
  $\gamma
  \preccurlyeq  {\comp(\mathcal T)}$ (a letter appearing many times in $T$ will not produce any descent during the standardisation process leading to
  ${\rm std}(T)$),
which means that  the monomial corresponding to $T$ appears in $ (-1)^{{\rm inv}(\mathcal T)}L_{\comp(\mathcal T)}$ (with the same sign since the standardisation does no change the order of the circles).
On the other hand, if $\beta  \preccurlyeq  {\comp(\mathcal T)}$, then a
tableau $T$ of weight $\beta$ can be obtained by merging and relabeling the letters of
$\mathcal T$ to form a tableau of weight $\beta$. That is, every monomial in
$L_{\comp(\mathcal T)}$ corresponds to a monomial associated to a given tableau $T$. 
\end{proof}

\begin{example} Let $\Omega$ be the empty superpartition and let
  $\Lambda=(1;2,2)$.  There are 10 standard $s$-tableaux of shape
  $(1;2,2)$:
$$
{\small {\tableau[scY]{1&3\\2&5\\4&\bl\tcercle{1}}}}\,  \qquad {\small {\tableau[scY]{1&4\\2&5\\3&\bl\tcercle{1}}}}\,  \qquad {\small {\tableau[scY]{1&3\\2&5\\4&\bl\tcercle{2}}}}\,   \qquad {\small {\tableau[scY]{1&4\\2&5\\3&\bl\tcercle{2}}}}\,   \qquad {\small {\tableau[scY]{1&2\\3&5\\4&\bl\tcercle{3}}}}\,   \qquad {\small {\tableau[scY]{1&4\\2&5\\3&\bl\tcercle{3}}}}\,  \qquad {\small {\tableau[scY]{1&2\\3&5\\4&\bl\tcercle{4}}}}\,   \qquad {\small {\tableau[scY]{1&3\\2&5\\4&\bl\tcercle{4}}}}\,   \qquad {\small {\tableau[scY]{1&2\\3&4\\5&\bl\tcercle{5}}}}\,  \qquad {\small {\tableau[scY]{1&3\\2&4\\5&\bl\tcercle{5}}}}
$$
which gives
$$
s_{(1;2,2)}= L_{(\dot 1 ,2 ,2)} + L_{(\dot 1 ,1,2 ,1)}+  L_{(1, \dot 1 ,1,2)}+ L_{(1, \dot 1 ,2,1)}+ L_{(2, \dot 1 ,2)} + L_{(1,1, \dot 1 ,1,1)} +L_{(2,1, \dot 1 ,1)} + L_{(1,2, \dot 1 ,1)}+L_{(2, 2,\dot 1 )} + L_{(1, 2, 1,\dot 1 )}.
$$

\end{example}

\bibliographystyle{plain}
\bibliography{superQ}
\label{sec:biblio}

\end{document}